\documentclass[a4paper,10pt, twoside]{article}
\usepackage{bbm}

\usepackage[sort&compress, square, numbers]{natbib}

\usepackage[a4paper]{geometry} 
\geometry{left=3cm,right=3cm,top=2.5cm} 
\usepackage[utf8]{inputenc}
\usepackage[T1]{fontenc}
\usepackage[english]{babel}
\usepackage{lmodern}
\usepackage{amssymb,amsthm,amscd}
\usepackage[fleqn]{amsmath}
\usepackage{extarrows}
\usepackage[pdfborder={0 0 0},colorlinks,citecolor=blue,urlcolor=blue]{hyperref}
\usepackage{mathrsfs}
\usepackage{appendix}
\usepackage[usenames]{color}

\theoremstyle{plain}
\newtheorem{theorem}{Theorem}[section]
\newtheorem{corollary}[theorem]{Corollary}

\newtheorem{lemma}[theorem]{Lemma}

\theoremstyle{definition}

\theoremstyle{remark}
\newtheorem{remark}[theorem]{Remark}

\numberwithin{equation}{section}

\pagestyle{myheadings}
\def\titlerunning#1{\gdef\titrun{#1}}
\makeatletter
\def\author#1{\gdef\autrun{\def\and{\unskip, }#1}\gdef\@author{#1}}
\def\address#1{{\def\and{\\\hspace*{18pt}}\renewcommand{\thefootnote}{}%
		\footnote {#1}}%
	\markboth{\autrun}{\titrun}}
\makeatother
\def\email#1{E-mail: #1}
\def\subjclass#1{{\renewcommand{\thefootnote}{}%
		\footnote{\emph{Mathematics Subject Classification (2010):} #1}}}
\def\keywords#1{\par\medskip
	\noindent\textbf{Keywords.} #1}

\newcommand{\N}{\mathbb{N}}
\newcommand{\bZ}{\mathbb{Z}}
\newcommand{\R}{\mathbb{R}}

\newcommand{\calV}{\mathcal{V}}

\renewcommand{\P}{\mathbb{P}}
\newcommand{\A}{\mathbb{A}}
\newcommand{\E}{\mathbb{E}}
\newcommand{\T}{\mathbb{T}}
\newcommand{\Z}{\mathbb{Z}}

\newcommand{\D}{\mathscr{D}}

\newcommand{\I}{ {\textsc{I}}}
\newcommand{\J}{ \textsc{J} }

\newcommand{\abs}[1]{\left\lvert#1\right\rvert}
\newcommand{\ind}[1]{{\mathbf{1}_{\left\{#1\right\}}}}

\newcommand{\norme}[1]{{\left\Vert #1 \right\Vert}}

\newcommand{\calA}{\mathcal{A}}
\newcommand{\calB}{\mathcal{B}}

\newcommand{\calH}{\mathcal{H}}

\newcommand{\rmd}{\mathrm{d}}
\newcommand{\inp}[2]{\left\langle {#1},{#2}\right\rangle}

\newcommand{\tr}[1]{\mathrm{tr}\!\left(#1\right)}
\newcommand{\e}{\mathbf{e}}

\newcommand{\norm}[1]{{\left\Vert #1 \right\Vert}}

\renewcommand{\bar}[1]{\overline{#1}}

\renewcommand{\rho}{\varrho}
\renewcommand{\epsilon}{\varepsilon}
\renewcommand{\phi}{\varphi}

\renewcommand{\tilde}[1]{\widetilde{#1}}

\newcommand{\calM}{\mathcal{M}}

\newcommand{\rmM}{\mathrm{M}}
\newcommand{\rmQ}{\mathrm{Q}}

 \allowdisplaybreaks


\begin{document}

\baselineskip=15pt


\titlerunning{A second order  expansion   in LLT for BRW}

\title{ A second order expansion in
	the local limit theorem for a branching system of
	 symmetric irreducible random walks\thanks{Supported
		by the National Natural Science Foundation of China (NSFC, Grants No. 11971063) }}
\author{Zhi-Qiang Gao}

\date{\today}

\maketitle

\address{Z.-Q. Gao: Laboratory of Mathematics and Complex Systems (Ministry of Education), School of Mathematical Sciences, Beijing Normal University, Beijing 100875, P. R. China; \email{gaozq@bnu.edu.cn}
}

\subjclass{ 60F05; 60J10; 60G50; 60J80}

\textcolor{blue}{}\global\long\def\TDD#1{{\color{red}To\, Do(#1)}}
\begin{abstract}
Consider  a    branching random walk,  where the branching mechanism is governed by a Galton-Watson process,  and the migration by a  finite range symmetric irreducible  random walk on the integer lattice $\mathbb{Z}^d$. 
Let $Z_n(z)$ be the number of the particles in the $n$-th generation at the point $z\in \Z^d$.  
Under the mild moment conditions for offspring distribution of the underlying Galton-Watson,  we derive a second order expansion in the local limit theorem  for   $Z_n(z)$ for each given $z\in \Z^d$.  That generalises the results for simple branching random walks obtained by Gao [2018, SPA].

\end{abstract}

\keywords{branching random walk, local limit theorem, second order expansion}

\section{Introduction and Main result}
We consider a discrete-time branching random walk.  That describes the evolution of a population of
particles where spatial motion is present, hence generalizes the classical Galton-Watson branching processes. It has been a very active topic in probability,  because of its own importance and close connection with many other random models, e.g. multiplicative cascades, infinite particle systems, random fractals and discrete Gaussian free field. 

Since  Harris \cite [{Chapter III} \S16]{Harris63BP}  first proposed his conjecture on the question of central limit theorems for a branching random walk,   the topic has been widely studied and in various forms.  See e.g. \cite{Stam66,JoffeMoncayo73AM,AsmussenKaplan76BRW1,
	AsmussenKaplan76BRW2,Biggins90SPA,Jagers74JAP,Yoshida08,Nakashima11,
	GLW14,GL14,GK15,GL15,Kabluchko12,ChenHe2017,LouidorPerkins2015EJP} . 

  R\'ev\'esz  \cite{Revesz94} initiated the study of the convergence speed in local limit theorems for a simple branching random walk, where displacements of the particles are governed by  a simple random walk on $\bZ^d$.  Specially, R\'ev\'esz  gave a conjecture on the exact rate of the convergence speed for the simple branching random walk,  which was proved by Chen \cite{Chen2001}.  Then Gao \cite{Gao2016}  improved and modified  Chen's result. As a further generalization,  Gao \cite{Gao2018SPA}   obtained the second order asymptotic expansion of the local limit theorem  for the simple branching random walk.
      In this article, we continue the research line in \cite{Gao2018SPA} 
   by  extending the result therein to
the case  where  the migration mechanism is governed by a finite range symmetric irreducible random walk on $\Z^d$.   

We mention that some central limit theorems of different nature on the intrinsic  martingale (Biggins martingale) in  branching random walks have been established by  Gr\"ubel and Kabluchko \cite{GrubelKabluchko2016AAP}, Iksanov and Kabluchko \cite{iksanov_kabluchko_2016}, and some large deviations type results for branching random walks have been discussed by \cite{WangHuang2017JTP,HuangWangWang2020}.  For other aspects on  branching random walks, see for example, \cite{ABMY2000,GunKonigS2013EJP,HuangLiangLiu14,ILiuLiang2019,LiangLiu2020,WangHuang2019ECP,WangLiuLiuLi2019,Shi2015,Zeitouni2012}  and references therein. 

To introduce our main result, we need to make some basic settings.

In the $d$-dimensional lattice $ \mathbb{Z}^d$,  write $ \mathbf{0}=
(0,0,\cdots,0), \mathbf{1}=(1,1,\ldots, 1) $,    and let
\begin{equation*}
\mathbf{e}_1 =(1,0,\ldots, 0), \ldots, \mathbf{e}_d=(0,0, \ldots, 1) 
\end{equation*} 
be the standard  basis of  orthogonal   unit vectors.   
By convention, denote by $\inp{\cdot}{\cdot}$ the inner product in $\R^d$ and $\|\cdot\|$ the norm therein, i.e.
for $x=(x_1,\cdots,x_d)\in\mathbb{R}^d$ and $y=(y_1,\cdots,y_d)\in\mathbb{R}^d$, 
\begin{equation*}
\langle{x,y}\rangle=x_1y_1+x_2y_2+\cdots+x_dy_d,\quad \|x\|=\sqrt{x_1^2+x_2^2+\cdots+x_d^2}.
\end{equation*}
The multiplication of a vector $x\in \R^d$     by a real number $ \lambda \in \R$      is defined by
$
\lambda {x}=(\lambda x_1, \lambda x_2,\cdots, \lambda x_d).
$

Let $\N=\{0,1,2,\cdots\} $.  Suppose that $N$  is an integer-valued random variable  with  the probability law
\begin{equation*}
\P(N=k)=p_k,  \quad {k\in \N},  \quad    ~~\sum_{k=0}^\infty p_k=1,   
\end{equation*}
and 
   that $L$ is   a random variable with the probability law   
\begin{equation}\label{SRW}
\P(L= \mathbf{0})=\zeta_0, \quad  \P(L=r  \mathbf{e}_s)=\P(L=  -r\mathbf{e}_s)=\frac{1}{2} \zeta_{s,r},\quad 1\leq s\leq d, 1\leq r \leq t_s,
\end{equation}
where    each $t_s$  ($1\leq s\leq d$)    is a positive integer,   $ \zeta_0\in [0,1)$, $\zeta_{s,r} \in [0,1) $,      and 
 \begin{equation*}
 \zeta_{s,t_s}>0,   \quad  \quad \zeta_0+\sum_{s=1}^d \sum_{r=1}^{t_s}\zeta_{s,r}= 1. 
 \end{equation*}
For $ k\in \N$, set 
\begin{equation*}
\zeta_s(k) =\sum_{r=1}^{t_s} \zeta_{s,r} r^k,\quad  1\leq s\leq d.
\end{equation*}
Denote by $ \Gamma_k= \mathrm{diag}\big(\zeta_1(k), \cdots, \zeta_d(k)\big)$ the diagonal matrix with diagonal entries  $\zeta_1(k), \cdots, \zeta_d(k)$.  It is easy to see that 
\begin{equation*}
\det \Gamma_k= \prod_{s=1}^{d}\zeta_s(k),\quad \mathrm{tr} (\Gamma_k^{-1})= \sum_{r=1}^d \big(\zeta_r (k)\big)^{-1},
\end{equation*}   
where   $\det M $  is the  determinant of a $d\times d$ matrix $M$, $M^{-1}$ is  its inverse,  and $\mathrm{tr} (M)$ is its  trace.
By convention,  for a vector $x$, the notion  $M x$  is viewed as  the product of $M$ and  the  column matrix $x$.

Throughout the article,   we shall  assume that the law \eqref{SRW} of $L$ satisfies
\begin{equation}\label{irr}
\gcd \{r : \zeta_{r,s}>0 \}=1, \quad s=1,2,\cdots,d, 
\end{equation}
where $\gcd$ denotes the greatest common divisor.
This assumption implies that 
 $V=\{r\mathbf{e}_s :  \zeta_{s,r} >0, 1\leq r\leq t_s, s=1,2,\cdots,d\}$ is a  \emph{generating set}  of $\Z^d$, which means that  
\begin{equation}\label{ir}
\forall  y\in \Z^d,  \  \ \exists  ~ \{ k_{r,s} \}  \subset \Z ,  \quad \mbox{s.t. }
y= \sum_{r\mathbf{e}_s\in V}  k_{r,s} r\mathbf{e}_s. 
\end{equation}
Moreover,   the random walk $S_n$ with such increment  distribution  $L$ must be    \emph{irreducible} (meaning that each point in $\Z^d$ can be reached  with positive probability \cite{Lawler2010}).

Under the hypothesis \eqref{irr}   for $L$, 
there  are two possible cases:
\begin{itemize}
	\item[(Ha)]  there exists one $s$ such that 
	 either the set  $\{ r: \zeta_{s,r} >0\}  $ contains  at least one odd  integer and one even, 
	 or  the set  $ \{ r: \zeta_{s,r} >0\}  $ only contains odd integers  and $\zeta_0>0$;
   \item[(Hb)] $\zeta_0=0$ and for each $1\leq s\leq d$,  $ \{ r: \zeta_{s,r} >0\}  $ only contains odd integers.
\end{itemize}

Denote by $\calA_d$ the set of all probability distributions $L$  with the law $ \eqref{SRW}$    satisfying  \eqref{irr} and (Ha). When $L\in \calA_d$,   the random walk with  increment  distribution  $L$   is  \emph{aperiodic}, which means that  each point on $\mathbb{Z}^d$ can  be reached   after $n$ steps with positive probability for all $n$ sufficiently  large.  

 
 Denote by $\calB_d$ the set of all probability distributions $L$  with the law $ \eqref{SRW}$ satisfying \eqref{irr} and (Hb). For $L\in \calB_d$,   the associated random walk is \emph{bipartite}, that means  the random walks starting from a given point $x_0$ return to  $x_0$ only after an even number of steps.   In this case, $\Z^d$ is divided into two disjoint sets $\Z_o$ and $\Z_e$, such that  the walk starting form the origin reaches  the states set $\mathbb{Z}_o$ in an  odd number of steps and reaches  $\mathbb{Z}_e$ in an even number of  steps.

  For example, if     $L$ with  the law  \eqref{SRW}  satisfies
 \begin{equation*}
 \zeta_0=\sigma\in[0,1), \qquad \zeta_{s,1}=(1-\sigma)/d,\quad  1\leq s \leq d,
 \end{equation*}  
 then  when $\sigma>0$,   $L\in \calA_d$ and the associated random walk is
 aperiodic;  whereas  when $\sigma=0$,  $L \in\calB_d$ 	and the associated random walk is bipartite .

Consider a  discrete-time branching random walk in the $d$-dimensional integer lattice $\mathbb{Z}^d$. 
Initially, an ancestor particle $\varnothing$ is located at $S_\varnothing=\mathbf{0} \in\mathbb{Z}^d$. 
At time $1$,   it is replaced by $N=N_\varnothing$ particles ($\varnothing 1, \cdots,\varnothing N $), and each particle $\varnothing i( 1\leq i\leq N)$   moves to  $S_{\varnothing i}=S_{\varnothing}+ L_{\varnothing i}$.        In general, at time $n+1$, each particle of generation $n$,  which is denoted by a sequence of positive integers of length $n$, i.e.  $u=u_1u_2\cdots u_n $,  
is replaced   by   $ N_u$ new particles of generation $n+1$,  $\{ ui:1\leq i\leq N_u \}$, with  displacements $ L_{u1}, L_{u2}, \cdots, L_{uN_u}$. That means  for $1\leq i\leq N_u$,  each particle $u i $ moves to $ S_{ui}= S_u+L_{ui} $.   Here, all $N_u$ and $L_u$,  indexed by finite sequences of integers $u$, are independent random elements defined  on some probability space $(\Omega,\mathcal{F}, \mathbb{P})$: all $N_u$ are independent copies of the integer-valued random variable $N$;  
all  $L_u$  are  independent random vectors  identically distributed (abbreviated as i.i.d) with $L$ defined by the law \eqref{SRW}.

The genealogy of all particles  forms a Galton-Watson tree $\T$ with $\{N_u\}$ as defining elements. Let  
\begin{equation*}
\mathbb{T}_n =\{u\in
\mathbb{T} :|u|=n\} 
\end{equation*} 
be the set of particles of generation $n$, where $|u|$ denotes the length of the
sequence $u$ and represents the number of generation to which $u$ belongs. 

Let $Z_n(\cdot)$ be the counting measure of particles in the $n$-th generation: for $B\subset \Z^d$,
\begin{equation*}
Z_n(B)= \sum_{u\in \T_n}\mathbf{1}_{B}(S_u).
\end{equation*}  
In particular, we will frequently write $Z_n(z)$  instead of $Z_n(\{z\})$, which  is the number of the $n$-th generation individuals located at $z\in \bZ^d$ by definition.


    By the definitions,  $\{Z_n(\Z^d)\}_n$ forms a  Galton-Watson process.  Throughout the paper, we  assume
    \begin{equation}\label{SC}
    m=\E N=\sum_{k=1}^\infty kp_k >1,
    \end{equation} 
   and the process  is  called \textit{supercritical}. 
 In this case, the process  survives with  strictly  positive probability.
 For ease of notion, we always  assume that  
 \begin{equation}\label{NE}
 \P(N=0)=0,
 \end{equation}  
 that implies that  
 the  process \textit{survives  with probability 1}. 
 Obviously this  assumption could be removed, but then all results hold conditionally on
 non-extinction.
Under this setting, it holds that   $\P (Z_n(\bZ^d) \rightarrow \infty) =1.$ 
 Provided that $\E N\log N<\infty$,  the   Kesten-Stigum
 theorem (\cite{AthreyaNey72,AsmussenHering1983})  asserts that the sequence $W_n= Z_n(\bZ^d)/m^n$ converges almost surely (abbreviated as  a.s.) to a finite positive random variable $  W$. 
 


With the above notion, our main result can be stated as follows.
\begin{theorem}	\label{SBRW-Th} Assume  that the conditions \eqref{SC} and\eqref{NE} hold,   $ \E N(\ln N)^{1+\lambda}<\infty$ for some  $\lambda>3(d+6)$  and $L$  obeys the law   \eqref{SRW}.   
	Then there exist some random variables  $\mathcal{V}_1, \mathcal{V}_2 ,\mathcal{V}_3  \in \R ^d$   and $\mathcal{V}^z_{2} ,\mathcal{V}_4\in \R$, such that   for each $z=(z_1,z_2,\ldots, z_d)\in \bZ^d$, as   $n\rightarrow \infty$,  
\\
 	 (I)  in the case $L\in \calA_d $,
 	 \begin{equation}
 	 \frac{1}{m^n} Z_n(z)= \dfrac{(2\pi n)^{-d/2}}{ \sqrt{ \det \Gamma_2} }    
 	 \bigg[   W + \frac{1}{n}  F_1(z)+\frac{1}{n^2}  F_2(z)  \bigg]  +  \frac{1}{n^{2+d/2}}o(1) \quad    a.s.,
 	 \end{equation} 
	 (II) in the case $L\in \calB_d$,   provided that $ n\equiv z_1+z_2+\cdots+z_d\, (\,\mathrm{ mod }~  2)$, 
	 \begin{equation}
	 \frac{1}{m^n} Z_n(z)= \dfrac{2(2\pi n)^{-d/2}}{ \sqrt{ \det \Gamma_2} }    
	 \bigg[   W + \frac{1}{n}  F_1(z)+\frac{1}{n^2}  F_2(z)  \bigg]  +  \frac{1}{n^{2+d/2}}o(1)   \quad a.s., 
	 \end{equation}
 where \begin{align}
 \label{f1}	  F_1(z)&=   \Big(\tau_d -\frac{1}{2} \inp{z}{\Gamma_2^{-1} z} \Big)W+ \inp{\mathcal{V}_1}{\Gamma_2^{-1} z}-  \frac{1}{2} \inp{\mathcal{V}_2 }{ \Gamma_2 ^{-1}\mathbf{1}},\\ 
 \label{eq-taud}\tau_d&=\dfrac{1}{8} \mathrm{tr} (\Gamma_4\Gamma_2^{-2}) -\dfrac{1}{8} d(d+2), 
 \\ 
 \nonumber 	  F_2(z)&= \Big(\dfrac{1}{8} \inp{\Gamma_2^{-1}z}{z}^2    -  \inp{\Lambda_d z}{z}+\chi_d\Big)W+  \inp{\calV_1}{\Big(2\Lambda_d-\frac{1}{2} \inp{z}{ \Gamma_2^{-1}z}\Gamma_2^{-1}  \Big)z}  
 \\
 \label{f2}
 &  \quad+ \inp{\calV_2}{\Big(\frac{1}{4} \inp{z}{\Gamma_2^{-1}z}\Gamma_2^{-1}-\Lambda_d\Big)\mathbf{1}}  + \frac{1}{2} \calV_2^{z}-\frac{1}{2}\inp{ \calV_3}{\Gamma_2^{-1}z}+\frac{1}{8}\calV_4,
 \\
 \label{eq-Lambdad}\Lambda_d&=\dfrac{1}{16}\Big(\tr{\Gamma_4\Gamma_2^{-2}}-(d+2)(d+4)\Big)\Gamma_2^{-1}  +\frac{1}{4} \Gamma_4 \Gamma_2^{-3},
 \\
 \nonumber  \chi_d&=   -\dfrac{1}{64}(d+2)(d+4)\tr{\Gamma_4\Gamma_2^{-2}} + \frac{1}{12}\tr{\Gamma_4^2\Gamma_2^{-4}}+
 \frac{1}{128}\Big(\tr{\Gamma_4\Gamma_2^{-2}}\Big)^2
 \\  \label{eq-Cd}
 &\qquad\qquad\qquad\qquad\qquad\qquad\qquad  -\frac{1}{48}\tr{\Gamma_6\Gamma_2^{-3}}
 +\dfrac{1}{384}d(d+2)(d+4) (3d+2).  
 \end{align} 
\end{theorem}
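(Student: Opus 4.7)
The approach is Fourier inversion. Let $\phi(t)=\rmE[e^{i\inp{t}{L}}]$, which is real-valued and even since $L$ is symmetric, and introduce the complex Biggins martingale
\[
W_n(t) \;=\; \frac{1}{m^n\phi(t)^n}\sum_{u\in \T_n}e^{i\inp{t}{S_u}}, \qquad W_n(0)=W_n,
\]
which is an $\calF_n$-martingale in $n$ for each fixed $t$ (and converges a.s.\ near $t=0$). Fourier inversion on the discrete torus gives
\[
\frac{Z_n(z)}{m^n} \;=\; \frac{1}{(2\pi)^d}\int_{[-\pi,\pi]^d}e^{-i\inp{t}{z}}\,\phi(t)^n\,W_n(t)\,\rmd t.
\]
Under (Ha) one has $|\phi(t)|<1$ away from $\mathbf{0}$, while under (Hb) the only additional point of the torus with $|\phi|=1$ is $\pi\mathbf{1}$, where $\phi(\pi\mathbf{1})=-1$ because all $r$ in the support of $L$ are odd. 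The whole task is to produce an asymptotic expansion of this integral up to order $n^{-2-d/2}$.

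My plan is to split the domain into a central window $\{\|t\|\le n^{-1/2+\epsilon}\}$, an intermediate annulus, and a far region, with a parallel decomposition around $\pi\mathbf{1}$ in the bipartite case, and to rescale $t=u/\sqrt n$ in the central window. The deterministic factor admits the expansion
\[
\phi\!\left(\tfrac{u}{\sqrt n}\right)^{\!n} = e^{-\frac12\inp{u}{\Gamma_2 u}}\bigg(1+\frac{A(u)}{n}+\frac{B(u)}{n^2}+O(n^{-3})\bigg),
\]
where $A,B$ are explicit polynomials in $u$ built from $\Gamma_2,\Gamma_4,\Gamma_6$ by Taylor-expanding $\log\phi$; the character $e^{-i\inp{u}{z}/\sqrt n}$ likewise expands as a polynomial in $z/\sqrt n$, and the random factor is Taylor-expanded around $u=0$:
\[
W_n\!\left(\tfrac{u}{\sqrt n}\right) \;=\; \sum_{|\alpha|\le 4}\frac{(u/\sqrt n)^\alpha}{\alpha!}\,\partial^\alpha W_n(0)+R_n(u).
\]
For each multi-index $\alpha$ the derivative $\partial^\alpha W_n(0)$ decomposes into an explicit polynomial-in-$n$ trend plus an $\calF_n$-martingale; under the assumed logarithmic moment on $N$ these martingales converge almost surely, and their limits are precisely the random variables $\calV_1,\calV_2,\calV_2^z,\calV_3,\calV_4$ of the statement (for instance $\calV_1=\lim_n m^{-n}\sum_{u\in\T_n}S_u$ from $\partial W_n(0)$, and $\calV_2$ is assembled from the limits of $m^{-n}\sum_u\bigl((S_u^{(s)})^2-n\zeta_s(2)\bigr)$ along each axis).

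Multiplying the three expansions together produces Gaussian integrals against $e^{-\frac12\inp{u}{\Gamma_2 u}}$ in the $u$ variable. Terms of odd total $u$-degree vanish by symmetry, so only the $O(1)$, $O(1/n)$ and $O(1/n^2)$ contributions survive; evaluating the corresponding Gaussian moments in terms of $\det\Gamma_2$, $\tr{\Gamma_4\Gamma_2^{-2}}$, $\tr{\Gamma_6\Gamma_2^{-3}}$ and related invariants gives the announced constants $\tau_d,\Lambda_d,\chi_d$ and the full formulas \eqref{f1}--\eqref{eq-Cd}. In the bipartite regime a short direct computation using the parity identity $e^{i\inp{\pi\mathbf{1}}{S_u}}=(-1)^n$ for $u\in\T_n$ shows $W_n(\pi\mathbf{1}+v/\sqrt n)=W_n(v/\sqrt n)$, so the expansion around $\pi\mathbf{1}$ yields an identical main term multiplied by $(-1)^n e^{-i\inp{\pi\mathbf{1}}{z}}$; this doubles the contribution when $n\equiv z_1+\cdots+z_d\,(\mathrm{mod}\,2)$ and cancels it otherwise (consistent with $Z_n(z)=0$ in the opposite parity).

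The principal obstacle will be to show that the intermediate annulus, the far region, and the Taylor remainder $R_n$ all contribute $o(n^{-2-d/2})$ \emph{almost surely}. This forces uniform-in-$t$ bounds on $|W_n(t)|$ and on its derivatives up to order four, which in turn require control of $\rmE|W_n(t)|^p$ for large $p$ and $t$ in neighborhoods of $\mathbf{0}$ and $\pi\mathbf{1}$, together with a Borel--Cantelli argument along a grid of shrinking mesh. It is at this step that the logarithmic moment hypothesis $\rmE N(\log N)^{1+\lambda}<\infty$ with $\lambda>3(d+6)$ really enters, in order to pass from $L^p$ bounds on the fourth-order derivative martingales to the required a.s.\ control, and to upgrade convergence in probability of the $\calV_i$'s to the almost-sure convergence demanded by the final statement.
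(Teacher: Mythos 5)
Your outline is a genuinely different route from the paper's, and the step you flag as ``the principal obstacle'' is in fact where the proposal is left open and where, under the stated hypothesis, it is not clear it can be closed. The paper does not Fourier-invert $Z_n(z)/m^n$ as a whole. Instead it stops the tree at a slowly growing generation $k_n=\lfloor n^\kappa\rfloor$ with $\frac{d+6}{2\lambda}<\kappa<\frac16$, writes
\[
\frac{Z_n(z)}{m^n}=\mathbb{D}_{1,n}+\mathbb{D}_{2,n},
\]
where $\mathbb{D}_{2,n}=m^{-k_n}\sum_{u\in\T_{k_n}}\P(\widetilde S_{n-k_n}=z-y)\big|_{y=S_u}$ is the conditional mean given $\D_{k_n}$ and $\mathbb{D}_{1,n}$ the fluctuation. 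The fluctuation is shown to be $o(n^{-2-d/2})$ a.s.\ by a truncation and Borel--Cantelli argument using only the Bingham--Doney bound $\E(W^*+1)\log^\lambda(W^*+1)<\infty$; that is exactly where the logarithmic moment $\E N(\ln N)^{1+\lambda}<\infty$ enters. The conditional mean is then handled by a purely \emph{deterministic} second-order LLT for the underlying random walk (Theorem \ref{Th2}), applied to $n-k_n$ steps started from the $S_u$; after expanding $(n-k_n)^{-j}$ in powers of $k_n/n$, the coefficients reorganize into a \emph{finite} family of polynomial martingales $N_{j,k_n}$, $N_{2,k_n}^z$ (these are, in effect, your derivatives $\partial^\alpha W_n(0)$), whose a.s. convergence with the required rate is the content of Theorems \ref{thmCRM} and \ref{thmMG}.

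This matters because your plan needs a.s., uniform-in-$t$ control of $|W_n(t)|$ and of its derivatives up to order four over a whole neighborhood of $\mathbf 0$ (and of $\pi\mathbf 1$), together with a.s.\ negligibility of the intermediate annulus and the Taylor remainder. A logarithmic moment hypothesis gives a.s.\ convergence of \emph{finitely many} martingales with algebraic rates and a $\log^\lambda$ tail for $W^*$, but the known uniform-in-$t$ bounds for the complex Biggins martingale and its derivatives (as in the Edgeworth-expansion literature for profiles, cf.~\cite{GK15}) are usually built on $L^p$ moments of $N$ for some $p>1$, and even then the resulting expansions are typically in $L^p$ or in probability rather than almost sure. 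The paper's conditioning at $k_n$ is precisely the device that converts ``control $W_n(t)$ over a continuum of $t$'' into ``control $W_{k_n}$ plus a few polynomial martingales evaluated at generation $k_n$,'' which the assumed moment condition can handle. So while your scheme is coherent at the formal level and your bipartite bookkeeping (the identity $W_n(\pi\mathbf 1+v)=W_n(v)$ and the resulting doubling or cancellation according to the parity of $n-(z_1+\cdots+z_d)$) is correct and matches the paper's case (II), the central a.s.\ uniform-control step is genuinely missing, and it is not implied by, nor easily obtained from, the hypotheses of the theorem; the paper's time-splitting decomposition is specifically what avoids having to prove it.
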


\begin{remark}\label{Rmk1}
In Section \ref{Sec3}, we shall give the detailed accounts on the quantities $\mathcal{V}_1,  \mathcal{V}_2 ,\mathcal{V}_3  \in \R ^d$
and  	$ \calV_2^z, \mathcal{V}_4\in \R$.  We prove that these random variables are limits of the following  sequences:    
 \begin{equation*}
  \mathcal{V}_j  \xlongequal{{a.s.}}  \lim_{n\rightarrow \infty} N_{ j,n} \quad  j=1,2,3,4;   \quad \calV_2^z \xlongequal{{a.s.}}  \lim_{n\rightarrow \infty} N_{ 2,n}^z,
 \end{equation*}
   where
	\begin{align*}
	&N_{1,n}= \frac{1}{m^n}\sum_{u\in \T_n} S_u,\\
	&N_{2,n}= \frac{1}{m^n}\sum_{u\in \T_n}\Big( \inp{S_u}{\e_1}^2-n\zeta_1(2), \inp{S_u}{\e_2}^2-n\zeta_2(2),\cdots, \inp{S_u}{\e_d}^2-n\zeta_d(2)\Big),	
\\ & N_{2,n}^z=   \frac{1}{m^n}\sum_{u\in \T_n}\left[\inp{S_u}{\Gamma_2^{-1}z}^2  - n\inp{\Gamma_2^{-1}z }{z}\right], 
	\\ & N_{3,n}=  \frac{1}{m^n}\sum_{u\in \T_n} \left[  \inp{S_u}{\Gamma_2^{-1}S_u} S_u    -(d+2)n S_u\right],
	\\ &  N_{4,n}=   \frac{1}{m^n}\sum_{u\in \T_n}  \left[  \inp{S_u}{\Gamma_2^{-1}S_u}^2- (4+2d) n  \inp{S_u}{\Gamma_2^{-1}S_u} +d(d+2) (n^2+  n) - \mathrm{tr} (\Gamma_4 \Gamma_2^{-2}) n \right].
	\end{align*}
	Moreover we also consider the convergence rate of the above limit procedure in the next section. Those  play important roles in the proof of  Theorem \ref{SBRW-Th}.
\end{remark}


  Comparing the bipartite case with the aperiodic case,   a phase transition occurs. 	The result indicates  that the   periodicity of the random walk alternates the formulation of the limit laws with a factor $2$. 
	 In the following Corollary \ref{SBRW-Cor}, we discuss the example mentioned before  when the underlying  moving law is governed by  a  simple random walk or its lazy version (meaning stay there  with positive probability).

\begin{corollary}\label{SBRW-Cor}
Suppose that	 the conditions \eqref{SC} and \eqref{NE} hold,    $ \E N(\ln N)^{1+\lambda}<\infty$ for some  $\lambda>3(d+6)$,  and  the law of $L$ in \eqref{SRW} satisfies   $\zeta_0= \sigma   ,\zeta_{1,1}=\zeta_{2,1}=\cdots= \zeta_{d,1}=(1-\sigma )/d. $
	Then there exist some random variables $ \mathcal{V}_1,\mathcal{V}_2 ,   \widetilde{\calV_2^z},\widetilde{ \calV_3},\widetilde{ \calV_4}   $, such that   for each $z=(z_1,z_2,\ldots, z_d)\in \bZ^d$, as   $n\rightarrow \infty$, 
	\\ 
	(\textbf{I}) when $\sigma>0$,   
	\begin{equation*}
	\frac{1}{m^n} Z_n(z) =   \Big(\dfrac{d }{ 2\pi n(1-\sigma)  }\Big)^{d/2}    
	\bigg[   W + \frac{1}{n}   {\calH_{\sigma,1}} (z)+\frac{1}{n^2}  {\calH_{\sigma,2}} (z) \bigg]  +  \frac{1}{n^{2+d/2}}o(1)\quad a.s., 
	\end{equation*}
		(\textbf{II}) when  $\sigma=0$, 
		provided that $ n\equiv z_1+z_2+\cdots+z_d\, (\,\mathrm{ mod }~  2)$, 
		\begin{equation*}
		\frac{1}{m^n} Z_n(z) =  2 \Big(\dfrac{d }{ 2\pi n  }\Big)^{d/2}    
		\bigg[   W + \frac{1}{n} { \calH_{0,1}}(z)+\frac{1}{n^2}  {\calH_{0,2}}(z)  \bigg]  +  \frac{1}{n^{2+d/2}}o(1)\quad a.s., 
		\end{equation*}
	where  for $\sigma\in [0,1)$,
	\begin{align*}
	 {\calH_{\sigma,1}} (z)&=\frac{d}{1-\sigma} \bigg[ \Big(\frac{1}{8}\sigma(d+2) - \frac{1}{4}-\frac{1}{2}\norm{z}^2\Big)W+ \inp{z}{\calV_1}-\frac{1}{2}
	\inp{\calV_2}{\mathbf{1}}\bigg],
	\\
	  {\calH_{\sigma,2}} (z)&= \frac{d^2 }{ (1-\sigma)^2}\bigg[ \Big( \frac{1}{8}\norm{z}^4+\mu_{\sigma,d} \norm{z}^2+\chi_{\sigma,d}\Big)W+\Big(2\mu_{\sigma,d}-\frac{1}{2}\norm{z}^2\Big)\inp{\calV_1}{z}
	\\ 
	&\qquad\qquad\qquad\qquad+ \Big(\frac{\norm{z}^2}{4}-\mu_{\sigma,d}\Big)\inp{ \calV_2}{\mathbf{1}}+\frac{1}{2}\widetilde{ \calV_2^z}-\frac{1}{2}\inp{z}{\widetilde{ \calV_3}} +\frac{1}{8}\widetilde{\calV_4}\bigg],
\\ \mbox{ with~~~ }& 
	\mu_{\sigma,d}= - \frac{1}{8}\Big(1+\frac{4}{d}\Big) +\sigma\Big(\frac{d}{16}+\frac{3}{8}+\frac{1}{2d}\Big),
 \\ & 
	\chi_{\sigma,d}= \frac{d}{48}-\frac{1}{32}+\frac{1}{24d}+\sigma\frac{1}{64}(d+2)(d+4)\Big( \frac{\sigma}{2}+ \frac{\sigma-2}{3d}\Big ).
 	\end{align*} 
 

\end{corollary}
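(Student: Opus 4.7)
The plan is to specialize Theorem \ref{SBRW-Th} to the particular law in \eqref{SRW} with $\zeta_0 = \sigma$ and $\zeta_{s,1} = (1-\sigma)/d$ for $1 \leq s \leq d$, so the proof is essentially a substitution exercise.

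First I would verify which of the cases (Ha)/(Hb) applies. Since $t_s = 1$ for every coordinate, the set $\{r : \zeta_{s,r} > 0\} = \{1\}$ has gcd equal to $1$, so \eqref{irr} holds trivially. When $\sigma > 0$, the second clause of (Ha) is satisfied (only the odd integer $1$ appears, and $\zeta_0 > 0$), hence $L \in \calA_d$ and part (I) of Theorem \ref{SBRW-Th} applies. When $\sigma = 0$, condition (Hb) holds, $L \in \calB_d$, and part (II) applies; this accounts for the parity restriction $n \equiv z_1 + \cdots + z_d \,(\mathrm{mod}\,2)$ and for the extra factor $2$ in front of the expansion in \textbf{(II)}.

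Second, I would compute the ingredients $\Gamma_k$, $\det \Gamma_2$, and the relevant traces. Since $t_s = 1$, the definition $\zeta_s(k) = \sum_r \zeta_{s,r} r^k$ collapses to $\zeta_s(k) = (1-\sigma)/d$ for every $k \geq 1$ and every $s$, so
\begin{equation*}
\Gamma_k = \tfrac{1-\sigma}{d}\, I_d, \qquad \Gamma_2^{-1} = \tfrac{d}{1-\sigma}\, I_d, \qquad \det \Gamma_2 = \Big(\tfrac{1-\sigma}{d}\Big)^d.
\end{equation*}
This converts the prefactor $(2\pi n)^{-d/2}/\sqrt{\det \Gamma_2}$ in Theorem \ref{SBRW-Th} into $(d/(2\pi n(1-\sigma)))^{d/2}$, reproducing the leading factors in both \textbf{(I)} and \textbf{(II)} of Corollary \ref{SBRW-Cor}.

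Third, I would insert these scalar matrices into \eqref{f1}--\eqref{eq-Cd}. The inner products collapse: $\inp{z}{\Gamma_2^{-1}z} = \tfrac{d}{1-\sigma}\|z\|^2$, $\tr(\Gamma_4\Gamma_2^{-2}) = d^2/(1-\sigma)$, $\tr(\Gamma_4^2\Gamma_2^{-4}) = d^3/(1-\sigma)^2$, and $\tr(\Gamma_6\Gamma_2^{-3}) = d^3/(1-\sigma)^2$, while $\Lambda_d$ is again a scalar multiple of the identity. Factoring $d/(1-\sigma)$ out of $F_1$ reproduces the bracket defining $\calH_{\sigma,1}(z)$; a heavier but entirely analogous computation for $F_2$, factoring out $d^2/(1-\sigma)^2$, yields $\calH_{\sigma,2}(z)$, with $\mu_{\sigma,d}$ and $\chi_{\sigma,d}$ being precisely what survives once the traces are expanded and powers of $\sigma$ collected. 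The tilded random variables $\widetilde{\calV_2^z},\widetilde{\calV_3},\widetilde{\calV_4}$ are then the natural rescalings of $\calV_2^z,\calV_3,\calV_4$ from Remark \ref{Rmk1} obtained by absorbing the same scalar factors so that only Euclidean inner products appear in the final expression.

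No new probabilistic estimate is required; the proof is a purely algebraic specialisation of Theorem \ref{SBRW-Th}. The main obstacle is the bookkeeping needed for the constant $\chi_d$ in \eqref{eq-Cd}, where six separate trace-contributions must be collected and simplified, and the verification that the coefficient of $\|z\|^2 W$ in $F_2$ indeed reduces to $\mu_{\sigma,d}$ after the $d^2/(1-\sigma)^2$ prefactor is pulled out.
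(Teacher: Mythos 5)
Your specialisation strategy is exactly what the paper does — its proof of the corollary consists of the single line ``This corollary follows immediately from Theorem \ref{SBRW-Th} by some elementary calculations.'' Your identification of (Ha) versus (Hb), the evaluation $\Gamma_k = \tfrac{1-\sigma}{d}\,I_d$ for every $k$, the resulting simplification of the prefactor $(2\pi n)^{-d/2}/\sqrt{\det\Gamma_2}$, and the collapse of the traces are all correct, and the tilded random variables are indeed the natural rescalings of those in Remark \ref{Rmk1}.

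One caution on the verification you defer to the end. When you specialise $F_2$ with $\Lambda_d=\lambda I_d$ and $\Gamma_2^{-1}=\gamma I_d$ (where $\gamma = d/(1-\sigma)$), the $\|z\|^2$-coefficient of $W$ is $-\lambda$, the $\|z\|^0$-coefficient of $\inp{\calV_1}{z}$ is $2\lambda$, and the $\|z\|^0$-coefficient of $\inp{\calV_2}{\mathbf 1}$ is $-\lambda$. Matching these against $\calH_{\sigma,2}$, whose corresponding coefficients are $\gamma^2\mu_{\sigma,d}$, $2\gamma^2\mu_{\sigma,d}$ and $-\gamma^2\mu_{\sigma,d}$, requires $\mu_{\sigma,d}=-\lambda/\gamma^2$ from the $W$-term but $\mu_{\sigma,d}=+\lambda/\gamma^2$ from the other two — an incompatibility. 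A direct check at $\sigma=0$ is instructive: there $\Gamma_k=\tfrac1d I_d$ gives $\Lambda_d=-\tfrac{d(d+4)}{8}I_d$, so $-\inp{\Lambda_d z}{z}=+\tfrac{d(d+4)}{8}\|z\|^2$, whereas $\gamma^2\mu_{0,d}=d^2\cdot\bigl(-\tfrac18(1+\tfrac4d)\bigr)=-\tfrac{d(d+4)}{8}$. So the substitution does \emph{not} reproduce the displayed $\calH_{\sigma,2}$ on the nose; the $\calV_1$- and $\calV_2$-terms fix $\mu_{\sigma,d}$ as stated, while the $W$-term then carries the opposite sign. This looks like a sign slip in the corollary's $W$-bracket rather than a flaw in your method, but the verification you propose to run (``that the coefficient of $\|z\|^2 W$ in $F_2$ indeed reduces to $\mu_{\sigma,d}$'') would actually fail as stated, and you should be ready for that when you carry out the bookkeeping.
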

\begin{remark}
In this corollary,	the random variables $ \mathcal{V}_1,\mathcal{V}_2 ,   \widetilde{\calV_2^z},\widetilde{ \calV_3},\widetilde{ \calV_4}   $ are limits of the following sequences:
	\begin{equation*}
	\mathcal{V}_j  \xlongequal{{a.s.}}  \lim_{n\rightarrow \infty} N_{ j,n} ,\quad  j=1,2, ;  ~~ \widetilde{ \calV_j} \xlongequal{{a.s.}}  \lim_{n\rightarrow \infty} \widetilde{N_{ j,n}}, \quad  j=3,4, 
	\quad \widetilde{\calV_2^z} \xlongequal{{a.s.}}  \lim_{n\rightarrow \infty} \widetilde{N_{ 2,n}^z},
	\end{equation*}
	with
	\begin{align*}
	&N_{1,n}= \frac{1}{m^n}\sum_{u\in \T_n} S_u,\\
	&N_{2,n}= \frac{1}{m^n}\sum_{u\in \T_n}\Big( \inp{S_u}{\e_1}^2-\frac{n(1-\sigma)}{d} , \inp{S_u}{\e_2}^2-\frac{n(1-\sigma)}{d} ,\cdots, \inp{S_u}{\e_d}^2-\frac{n(1-\sigma)}{d} \Big),	
	\\ & \widetilde{N_{2,n}^z}=   \frac{1}{m^n}\sum_{u\in \T_n}\left(\inp{S_u}{z}^2  - \frac{n(1-\sigma)}{d}\norm{z}^2\right), 
	\\ & \widetilde{N_{3,n}}=  \frac{1}{m^n}\sum_{u\in \T_n} \left[  \norm{S_u}^2 S_u    -\Big(1+\frac{2}{d}\Big)n(1-\sigma) S_u\right],
	\\ & \widetilde{N_{4,n}} =   \frac{1}{m^n}\sum_{u\in \T_n}  \left[   \norm{S_u}^4- \Big(2+\frac{4}{d}\Big) n (1-\sigma) \norm{S_u}^2+ \Big(1+\frac{2}{d}\Big) \Big(
	(1-\sigma)^2 n^2-\sigma n \Big)+ \frac{2}{d} n \right].
	\end{align*}	 
\end{remark}

\begin{remark}
	 Corollary  \ref{SBRW-Cor}  generalises the main results of \cite{Gao2016,Gao2018SPA}. Moreover,
 the expansions in  Corollary  \ref{SBRW-Cor} (II)    was obtained in \cite[Theorem 1.1]{Gao2018SPA} under the stronger moment condition $\lambda>6(d+5)$.
\end{remark}

At the end of this section, we  comment briefly on the strategy of proofs. Although the starting general ideas herein  are  close to those of  \cite{Gao2016,Gao2018SPA},  the details of the proofs require more refined estimates and the arguments  are tricky. Specially what sets this work aside is   the explicit expressions of the expansion terms.    On the technical level,  the proofs need  much more complicated   calculations.

The rest of the paper is organized as follows.  In the next section, we discuss  the convergence properties of the sequences $\{N_{1,n}\}, \{N_{2,n}\}, \{N_{3,n}\}$, $\{N_{4,n}\}$,  and $\{N_{2,n}^z\}$, as mentioned in Remark  \ref{Rmk1}.
In Section \ref{Sec2}, we derive the second order expansion  in the local limit theorem for a symmetric 
random walk on the $d$-dimensional integer lattice, which is used in the proof of Theorem \ref{SBRW-Th}.  With the help of those  preliminary results, the last section  is
devoted to the proof of Theorem \ref{SBRW-Th}.

\section{Preliminary results}\label{Sec3}
In this section, we shall describe the   $\mathcal{V}_1,  \mathcal{V}_2 ,\mathcal{V}_3  \in \R ^d$ and  	$ \calV_2^z, \mathcal{V}_4\in \R$ appeared in the main theorem. 
We show that they are limits of some martingale sequences  mentioned in Remark \ref{Rmk1}.

Set
\begin{equation}\label{D-fil}
\mathscr{D}_0=\{\emptyset,\Omega \}, \quad  \mathscr{D}_n  =  \sigma (  N_u, L_{ui}:  i\geq 1, |u| <
n)   \mbox{ for  $n\geq 1$}.
\end{equation}
Denote respectively by $\P_{\D_n}$ and $\E_{\D_n}$  the conditional probability  and conditional expectation with respect to $\D_n$. 

The results, which we shall need in the proof of Theorem \ref{SBRW-Th}, are sated as follows: 
\begin{theorem}\label{thmCRM} 
	The sequences $ \{ N_{q,n} \} (q=1,2,3,4)$ and $\{N_{2,n}^z\}$ are martingales  w.r.t. the filtration $\{\D_{n}\}_{n\geq0} $. 
Moreover, if \eqref{SC} and \eqref{NE} hold, $\E N(\ln N)^{1+\lambda}<\infty$ for some $\lambda>4$ and $L$ obeys the law \eqref{SRW},
   then  $ \{ N_{q,n} \} (q=1,2,3,4)$  converge to   limit random variables (vectors)
	$\mathcal{V}_q(q=1,2,3,4)$ and $\mathcal{V}_2^z$  \mbox{ a.s.} with rates  $o(n^{-(\lambda-3)})$, respectively.
\end{theorem}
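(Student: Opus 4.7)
The proof has two distinct parts. First I would verify that each sequence is a martingale by directly computing $\E_{\D_n}[N_{q,n+1}]$. Using the branching decomposition
\[
\sum_{u\in\T_{n+1}} g(S_u) \;=\; \sum_{u\in\T_n}\sum_{i=1}^{N_u} g(S_u+L_{ui}),
\]
together with the fact that for $u\in\T_n$ the variables $N_u$ and $\{L_{ui}\}_{i\geq 1}$ are independent of $\D_n$, this reduces to evaluating $m\cdot \E[g(S_u+L)]$ summed over $u\in\T_n$. For $N_{1,n}$ the martingale identity follows at once from $\E L=\mathbf{0}$. For the remaining sequences, I would Taylor-expand $g(S_u+L)$ and invoke the symmetry of $L$ (which kills all odd moments) together with $\E\inp{L}{\e_j}^{2k}=\zeta_j(2k)$; the explicit polynomial corrections $n\zeta_j(2)$, $(d+2)nS_u$, $d(d+2)(n^2+n)-\tr{\Gamma_4\Gamma_2^{-2}}\,n$, and $n\inp{\Gamma_2^{-1}z}{z}$ appearing in $N_{2,n},N_{3,n},N_{4,n},N_{2,n}^z$ are calibrated precisely to cancel the non-martingale remainders, so that the martingale property becomes an algebraic identity among $\Gamma_{2k}$-traces and moments of $L$.

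For the almost-sure convergence with rate, I would decompose
\[
N_{q,n+1} - N_{q,n} = \frac{1}{m^{n+1}} \sum_{u\in\T_n} \Delta_u^{(q)},
\]
where each $\Delta_u^{(q)}$ is a $\D_{n+1}$-measurable centered function of $(N_u,(L_{ui})_{i\geq 1},S_u)$ and the family $\{\Delta_u^{(q)}\}_{u\in\T_n}$ is conditionally independent given $\D_n$. For an exponent $p\in(1,2]$ tied to $\lambda$, I would bound $\E|N_{q,n+1}-N_{q,n}|^p$ by a conditional Marcinkiewicz--Zygmund inequality, combined with two standard ingredients: (i) a truncation $N_u\wedge m^{\alpha n}$ whose tail is controlled by $\E N(\log N)^{1+\lambda}<\infty$, in the same spirit as the known rates for the Biggins martingale $W_n\to W$; and (ii) polynomial moment bounds $\E[|S_u|^{2k}W_n]=O(n^k)$ derived from Wald-type identities on the branching tree, which are required because $N_{3,n}$ and $N_{4,n}$ contain $S_u$ to degrees $3$ and $4$. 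Summing the resulting per-step estimate and applying Borel--Cantelli on the events $\{|N_{q,n+1}-N_{q,n}|>\epsilon\, n^{-(\lambda-3)}\}$ would then yield the a.s.\ rate $o(n^{-(\lambda-3)})$ via a telescoping bound on the tail $\sum_{k\geq n}(N_{q,k+1}-N_{q,k})$.

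The main obstacle, and what distinguishes the present situation from the simple random walk case treated in \cite{Gao2018SPA}, is the coupled handling of the heavy-tailed offspring distribution and the random-walk polynomial factors: the $\E N(\log N)^{1+\lambda}$ hypothesis contributes an error of order roughly $n^{-\lambda}$ per step, while the polynomial factor in $S_u$ inflates the martingale increments by $n^{k/2}$, and only the precise balance produces the exponent $-(\lambda-3)$. The general matrix structure through $\Gamma_2,\Gamma_4,\Gamma_6$ does not introduce a new conceptual difficulty but forces rather delicate bookkeeping when expanding quantities like $\inp{S_u+L}{\Gamma_2^{-1}(S_u+L)}^j$ for $j=1,2$, since the diagonal entries $\zeta_s(2k)$ are no longer all equal as in the simple random walk.
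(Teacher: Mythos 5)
Your verification of the martingale property matches the paper's: the paper likewise expands $\inp{S_{ui}}{\Gamma_2^{-1}S_{ui}}^j$, evaluates the moments of $L$, and exhibits each $N_{q,n}$ as $\frac{1}{m^n}\sum_{u\in\T_n} f(S_u,n)$ with $\E_{\D_n} f(S_{ui},n+1)=f(S_u,n)$. Where you diverge is the rate. The paper does not reprove the a.s.\ rate from first principles; it packages the truncation / Borel--Cantelli machinery into the abstract Theorem~\ref{thmMG} (quoted from \cite{Gao2018SPA}), whose hypotheses are only the growth bound $|f(x,n)|\leq C(\norm{x}^{2\rmQ}+n^{\rmQ})$ and $\E N(\ln N)^{1+\rmM}<\infty$ with $\rmM\geq\max\{2\rmQ-1,\rmQ\}$; the exponent $-(\lambda-3)$ then drops out by setting $\rmQ=2$ (so $\max\{2\rmQ-1,\rmQ\}=3$) for $N_{4,n}$. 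Your proposed Marcinkiewicz--Zygmund-plus-truncation argument is essentially a direct reproof of that quoted lemma, and the ingredients you list (truncation at $m^{\alpha n}$ calibrated to $\E N(\log N)^{1+\lambda}<\infty$ in the spirit of \cite{Asmussen1976AOP}, Wald-type moment bounds $\frac{1}{m^n}\E\sum_{u\in\T_n}\norm{S_u}^{2k}=O(n^k)$, conditional $p$-th moment inequality, Borel--Cantelli) are the right ones. Two cautions if you pursue it: first, $\E N^p=\infty$ may hold for every $p>1$ under the assumed moment condition, so the $p$-th-moment step must be applied to the truncated increments only, with the untruncated tail handled separately by the $\log$-moment; second, to turn a per-step estimate into a tail estimate via telescoping, you must show the per-step bounds are summable after multiplying by $n^{\lambda-3}$, not merely $o(n^{-(\lambda-3)})$ individually, and your ``inflation by $n^{k/2}$'' heuristic should be sharpened to reproduce the precise exponent $\max\{2\rmQ-1,\rmQ\}$ rather than $\rmQ$. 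In short, your route is sound but duplicates known work that the paper simply cites.
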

\begin{remark}
For  the simple branching random walk ($\zeta_0=0,\zeta_{1,1}=\cdots=\zeta_{d,1}=1/d $),	the fact that $\{N_{1,n}\} $ and $\{\inp{N_{2,n} }{ \mathbf{1}}\}$ are martingales was firstly observed by Chen  \cite{Chen2001}, where their a.s. convergences  are proved under the condition $\E N^2<\infty$. 
\end{remark}

Theorem \ref{thmCRM} is an immediate consequence of the following 
general result:

\begin{theorem}[\cite{Gao2018SPA}]\label{thmMG}Let $f$ be  a real (or vector-valued) function of $(x,n)\in \R^d\times \bZ$ satisfying $$\E f\big(x+(X-\mu),n+1\big)= f(x,n) \quad \mbox{ for $X$ a random vector with expectation $\mu$ } .$$  Consider a branching random walk with all $N_u$ and $L_u$ identically distributed  with $N$ and $X$, respectively. Then  
	 the associated sequence $\{\calM_n\}_{n\geq0} $ defined by 
	\begin{equation}\label{MartG}
	\calM_n= \dfrac{1}{m^n}\sum_{u\in \T_n} f\big(S_u-n\mu,n\big) 
	\end{equation}  
	is a martingale with respect to   the filtration $\{\D_{n}\}_{n\geq0} $.  

Assume that \eqref{SC} and \eqref{NE} hold. If 	given a constant $\rmQ\geq0$,  $ \E N(\ln N)^{1+\rmM} <\infty$ for some $M\geq \max\{2\rmQ-1,\rmQ\} $ and  $\E \norm{X}^{2\rmQ} <\infty$,   ,  
and the function $f$ satisfies the inequality
\begin{equation*}
\abs{f(x,n)} \leq C (\norme{x}^{2\rmQ}+n^\rmQ) \quad  \forall   (x,n)\in \R^d\times  \N, \  C  \mbox
	{ a constant},
\end{equation*}
then the martingale $\{\calM_n\}_{n\geq0} $ converges to a random element $\mathcal{V}$ a.s. and  
\begin{equation}\label{ConvMG}
\calM_n-\mathcal{V}=o(n^{-(\rmM-\max\{2\rmQ-1,\rmQ\} )})  \quad a.s.
\end{equation}
\end{theorem}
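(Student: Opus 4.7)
The sigma-algebra $\D_n$ contains all $N_u$ and $L_{ui}$ with $|u|<n$, so every position $S_u$ for $u\in\T_n$ is $\D_n$-measurable, while the pairs $(N_u,(L_{ui})_{i\geq 1})$ for $u\in\T_n$ are independent of $\D_n$ and mutually independent. Conditioning on $\D_n$ with $x=S_u$ held fixed, Wald's identity applied to the random sum over $N_u$, followed by the harmonicity hypothesis on $f$, gives
\begin{equation*}
\E\Big[\sum_{i=1}^{N_u} f\big(x+L_{ui}-(n+1)\mu,n+1\big)\,\Big|\,\D_n\Big] \;=\; m\,\E f\big((x-n\mu)+(L-\mu),n+1\big) \;=\; m\,f(x-n\mu,n).
\end{equation*}
Summing over $u\in\T_n$ and dividing by $m^{n+1}$ yields $\E[\calM_{n+1}\mid\D_n]=\calM_n$.

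\textbf{Strategy for the convergence rate.} Write the martingale difference
\begin{equation*}
\Delta_{n+1}\;:=\;\calM_{n+1}-\calM_n\;=\;\frac{1}{m^{n+1}}\sum_{u\in\T_n}X_u,
\end{equation*}
with $X_u:=\sum_{i=1}^{N_u}f(S_u+L_{ui}-(n+1)\mu,n+1)-m\,f(S_u-n\mu,n)$, so that conditionally on $\D_n$ the $X_u$ are independent and centered. Set $\alpha:=\rmM-\max\{2\rmQ-1,\rmQ\}$. The goal is to show $\sum_{n}n^{\alpha}|\Delta_{n+1}|<\infty$ almost surely; the tail of this series then dominates $n^{\alpha}|\calM_n-\mathcal{V}|$, yielding both the existence of the limit $\mathcal{V}$ and the stated rate in \eqref{ConvMG}.

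\textbf{The main $L^p$-estimate.} For a suitable $p=1+\eta$ with $\eta\leq\rmM$, a conditional Rosenthal/von Bahr--Esseen inequality for conditionally independent centered sums reduces matters to controlling $\sum_{u\in\T_n}\E[|X_u|^p\mid\D_n]$. Expanding $X_u$, the growth bound $|f(x,n)|\leq C(\|x\|^{2\rmQ}+n^{\rmQ})$ together with the $2\rmQ$-th moment of $L$ gives a pointwise estimate of the form $|X_u|^p\lesssim N_u^{\,p}(\|S_u\|^{2\rmQ p}+n^{\rmQ p})+\text{lower order}$. Taking unconditional expectation, the many-to-one identity $\E\sum_{u\in\T_n}g(S_u)=m^n\,\E g(S_n)$ and the random-walk moment bound $\E\|S_n-n\mu\|^{2\rmQ p}=O(n^{\rmQ p})$ convert this into an exponentially decaying bound on $\E|\Delta_{n+1}|^{p}$. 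The $\log$-weighted hypothesis $\E N(\log N)^{1+\rmM}<\infty$ enters through a truncation $N_u=N_u\ind{N_u\leq a_n}+N_u\ind{N_u>a_n}$ with $a_n$ polynomial in $n$: the bounded part is handled by the above Rosenthal bound, and the heavy-tail part directly via $\sum_n\P(N>a_n)(\log a_n)^{1+\rmM}<\infty$ and Borel--Cantelli. Chebyshev then produces the desired a.s. summability of $n^{\alpha}|\Delta_{n+1}|$.

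\textbf{Main obstacle.} The delicate point is calibrating two independent moment controls against each other: the $2\rmQ$-th moment of $L$ (driving the $\|S_u\|^{2\rmQ}$-term inherited from the growth of $f$) and the log-weighted moment of $N$ (driving the size of $\T_n$ and the Rosenthal inner sum). The exponent $\max\{2\rmQ-1,\rmQ\}$ is exactly the threshold at which these two error sources balance: for $\rmQ\geq 1$ the step-moment contribution $2\rmQ-1$ dominates, whereas for $\rmQ<1$ the offspring contribution $\rmQ$ takes over. Obtaining the sharp rate $\alpha=\rmM-\max\{2\rmQ-1,\rmQ\}$, rather than a weaker exponent, requires the truncation level $a_n$ to be chosen so that the truncated and heavy-tailed errors have the same order, in the spirit of the Biggins--Kyprianou refinement of the Kesten--Stigum theorem. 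Once the sharp $L^p$-bound on $\Delta_{n+1}$ is in place, Borel--Cantelli together with the telescoping $|\calM_n-\mathcal{V}|\leq\sum_{k\geq n}|\Delta_{k+1}|$ completes the proof.
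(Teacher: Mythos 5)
The paper does not prove Theorem \ref{thmMG}: it is imported verbatim (with the label ``\cite{Gao2018SPA}'') and used as a black box, so there is no ``paper's own proof'' to compare against. Evaluating your write-up on its own terms: the proof of the martingale property is correct and complete---conditioning on $\D_n$, using the independence of $(N_u,(L_{ui})_i)$ from $\D_n$, Wald's identity, and the mean-one harmonicity of $f$ is exactly the argument one needs, and it establishes $\E[\calM_{n+1}\mid\D_n]=\calM_n$.

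The rate part, however, is only a heuristic sketch and at least one of its concrete claims is doubtful. Your reduction (``show $\sum_n n^\alpha|\Delta_{n+1}|<\infty$ a.s., then telescope'') is sound in principle, and the ingredients you name---conditional von Bahr--Esseen/Rosenthal, the many-to-one identity, a truncation in $N$, Borel--Cantelli---are indeed the standard toolbox from Asmussen and Biggins for this type of result. But the step ``truncate at $a_n$ polynomial in $n$ and apply Borel--Cantelli via $\sum_n \P(N>a_n)(\log a_n)^{1+\rmM}<\infty$'' does not, as written, control what it needs to. The heavy-tail contribution to $\Delta_{n+1}$ is a sum over the entire generation $\T_n$, whose size is exponential in $n$; a Borel--Cantelli bound on the single-particle tail $\P(N>a_n)$ with polynomial $a_n$ does not dominate this, and the expected heavy-tail contribution $\E[N\ind{N>a_n}]\lesssim(\log a_n)^{-(1+\rmM)}$ only decays logarithmically for polynomial $a_n$, which is nowhere near the polynomial rate $n^{-\alpha}$ required. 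Arguments of this kind (e.g.\ the proof of Lemma \ref{SBRW-Lem1} in the present paper) use an exponential truncation level $m^{k_n}$ precisely for this reason. Finally, the appearance of the specific exponent $\max\{2\rmQ-1,\rmQ\}$ is asserted as the ``balance point'' of the two moment controls, but no computation is given that actually produces this number; as it stands the reader must take on faith that the sketch, once completed, yields exactly this exponent rather than a weaker one. In short: the martingale claim is proved, but the convergence-rate claim is a plausible outline with a concretely wrong truncation scheme and with the sharp exponent left unexplained.
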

\begin{remark} Throughout the paper, we use $C$ to denote  various
	positive constants whose values are of no importance.
	
In the case that $f\equiv 1$ and $\rmQ=0$,  this theorem implies    Theorem 2 of \cite{Asmussen1976AOP}: 
\vskip 3mm
{\it If   $ \E N(\ln N)^{1+\rmM} <\infty$, then  a.s.
	\begin{equation}\label{ConvW}
	W_n-W=o(n^{- \rmM }). 
	\end{equation}}	
\end{remark}
Now let us go  
to deduce Theorem \ref{thmCRM}.   	
\begin{proof}[Proof of Theorem \ref{thmCRM}]
	We first prove that $\calV_4=\lim_{n\rightarrow \infty} N_{4,n}$.	
	Observe that for $u\in \T_n,  1\leq i\leq N_{u}$,
	\begin{align*}
	&\inp{S_{ui}}{\Gamma_2^{-1}S_{ui}}^2= \inp{S_u}{\Gamma_2^{-1}S_u}^2+ \inp{L_{ui}}{\Gamma_2^{-1}L_{ui}}^2+4\inp{L_{ui}}{\Gamma_2^{-1}S_u}^2\\
	& ~~~\qquad \qquad\qquad  +2 \inp{S_u}{\Gamma_2^{-1}S_u}\inp{L_{ui}}{\Gamma_2^{-1}L_{ui}} +4  \inp{S_u}{\Gamma_2^{-1}S_u}   \inp{L_{ui}}{\Gamma_2^{-1}S_u} 
 \\
	& ~~~\qquad \qquad\qquad + 4 \inp{L_{ui}}{\Gamma_2^{-1}L_{ui}}     \inp{L_{ui}}{\Gamma_2^{-1}S_u},                                          \\
	& \inp{S_{ui}}{\Gamma_2^{-1}S_{ui}}=    \inp{S_u}{\Gamma_2^{-1}S_u}+ \inp{L_{ui}}{\Gamma_2^{-1}L_{ui}}+ 2\inp{L_{ui}}{\Gamma_2^{-1}S_u}.  
	\end{align*} 
As all $L_{ui}$ obeys the law \eqref{SRW},  we  have 
	\begin{align*}
&	\E \inp{L_{ui}}{\Gamma_2^{-1}L_{ui}}^2 = \sum_{s=1}^d \sum_{r=1}^{t_s} \zeta_{s,r}\inp{r\e_s}{\Gamma_2 ^{-1}r\e_s}^2= \sum_{s=1}^d \sum_{r=1}^{t_s} \zeta_{s,r} r^4 \big( \zeta_s(2)\big)^{-2}
	\\
	&\qquad\qquad\qquad\quad= \sum_{s=1}^d  \zeta_s(4 ) \big( \zeta_s(2)\big)^{-2}=\tr{\Gamma_4\Gamma_2^{-2}}
\\
&\E \inp{L_{ui}}{\Gamma_2^{-1}L_{ui}} =\sum_{s=1}^d \sum_{r=1}^{t_s} \zeta_{s,r}\inp{r\e_s}{\Gamma_2 ^{-1}r\e_s} = \sum_{s=1}^d \sum_{r=1}^{t_s} \zeta_{s,r} r^2 \big( \zeta_s(2)\big)^{-1}=d,   \quad~~~\E L_{ui}=0,  
	\\ 
	& \E _{\D_{n}} \inp{L_{ui}}{\Gamma_2^{-1}S_u}^2=\inp{S_u}{\Gamma_2^{-1}S_u},
	\\ 
	& \E _{\D_{n}}  \inp{L_{ui}}{\Gamma_2^{-1}L_{ui}} \inp{L_{ui}}{\Gamma_2^{-1}S_u}=0.
	\end{align*}
	Then
	\begin{align*}
		&  \E_{\D_n} \inp{S_{ui}}{\Gamma_2^{-1}S_{ui}}= \inp{S_u}{\Gamma_2^{-1}S_u}+d,
		\\ &  \E_{\D_n} \inp{S_{ui}}{\Gamma_2^{-1}S_{ui}}^2=\inp{S_u}{\Gamma_2^{-1}S_u}^2  + (4+2d) \inp{S_u}{\Gamma_2^{-1}S_u}+\tr{\Gamma_4\Gamma_2^{-2}},   
	\end{align*}	
	Hence for the function $f$ defined by   
	$$f(x,n)= \inp{x}{\Gamma_2^{-1}x}^2 -(4+2d) n\inp{x}{\Gamma_2^{-1}x}+d(d+2)n^2+d(d+2)n- \tr{\Gamma_4\Gamma_2^{-2}}n,$$ 
the following relations hold:
\begin{equation*}
\E_{\D_{n}} f(S_{ui},n+1)=f(S_u,n), \quad |f(x,n)|\leq C(\norme{x}^4+n^2).
\end{equation*} 
 So	the conditions of  Theorem \ref{thmMG}  is satisfied  with $\rmQ=2$,   
	 therefore the sequences $(N_{4,n})_{n\geq 0}$ converges to some random variable $\calV_4$ with  rate $o(n^{-(\lambda-3)})$.	
	
	Similarly, we can prove that 
each  sequence 
  $(N_{q,n})_{n\geq 0}$ ( $q=1,2,3$) converges to some random variable $\calV_q$ with  rate $o(n^{-(\lambda-q+1)})=o(n^{-(\lambda-3)})$,  and $(N^z_{2,n})_{n\geq 0}$ to $\calV^z_2$ with  rate $o(n^{-(\lambda-3)})$. 
		To avoid repetition, we omit the details.
\end{proof}


\section{Second order expansion in LLT for  a finite range symmetric random walk} \label{Sec2}

In this section, we shall derive second order expansions in the local limit theorem for a finite range symmetric 
random walk on the integer lattice. This is a useful  complement of those results on  local limit theorem summarized in  \cite[Chapter 2]{Lawler2010}.

\begin{theorem}\label{Th2} Assume $\kappa\in (0,1/6)$  and    $L$ obeys the law \eqref{SRW}.     Let  $\{\widetilde{L}_n\}$ be  independent copies of $L$  and   set $\widetilde{S}_n = \widetilde{L}_1+ \cdots + \widetilde{L}_n$.   Then for $z=(z_1,z_2,\cdots,z_d)\in \Z^d$  , as $n\rightarrow \infty$,   
\\ (I) in the case   $L\in \calA_d$,
\begin{multline}\label{Conv-rate-rw}
\P (\widetilde{S}_n =z )= \dfrac{  (2\pi n)^{-d/2}}  {\sqrt{ \det \Gamma_2} }    \Bigg\lbrace 1+ \dfrac{1}{n}  \bigg[ \tau_d -\dfrac{1}{2} \inp{z}{\Gamma_2^{-1}z}  \bigg]   
\\ 
+ \dfrac{1}{n^2}	\bigg[ \frac{1}{8} \inp{z}{\Gamma_2^{-1}z}  ^2
- \inp{ \Lambda_d z}{ z}  +\chi_d\bigg]  \Bigg\rbrace 
+\frac{1}{n^{d/2+2}} \gamma_{n}(z),
\end{multline} 
\\ (II)   in the case  $L \in \calB_d $,  provided that  $n\equiv z_1+\cdots +z_d ~ (\mathrm{mod} ~2)$,
 	\begin{multline}\label{Conv-rate-lrw2}
 	\P (\widetilde{S}_n =z )= \dfrac{ 2(2\pi n)^{-d/2}}  {\sqrt{ \det \Gamma_2} }  
 	\Bigg\lbrace 1+ \dfrac{1}{n}  \bigg[ \tau_d -\dfrac{1}{2} \inp{z}{\Gamma_2^{-1}z}  \bigg]   
 	\\ 
 	+ \dfrac{1}{n^2}	\bigg[ \frac{1}{8} \inp{z}{\Gamma_2^{-1}z}  ^2
 	- \inp{ \Lambda_d z}{ z}  +\chi_d\bigg]  \Bigg\rbrace 
 	+\frac{1}{n^{d/2+2}} \gamma_{n}(z),
 	\end{multline}
	where the quantities $ \tau_d  $, $\chi_d$  and $\Lambda_d $  are  defined by  \eqref{eq-taud} \eqref{eq-Cd} and\eqref{eq-Lambdad}, 
	and $\gamma_{n}(z)$  is  infinitesimal satisfying
	\begin{equation}\label{small}
	\sup_{\{ \|z\| \leq  C n^{\kappa}  \} } |\gamma_{n}(z)|   \xrightarrow{n\longrightarrow \infty}
	0.
	\end{equation}
\end{theorem}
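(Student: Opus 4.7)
The plan is to start from the Fourier inversion formula
\begin{equation*}
\P(\widetilde S_n = z) = \frac{1}{(2\pi)^d}\int_{[-\pi,\pi]^d} e^{-i\inp{\theta}{z}}\phi(\theta)^n\,d\theta,
\end{equation*}
where $\phi(\theta) = \E e^{i\inp{\theta}{L}} = \zeta_0 + \sum_{s=1}^{d}\sum_{r=1}^{t_s}\zeta_{s,r}\cos(r\theta_s)$ is real by symmetry of $L$, and to carry out a local Edgeworth-type expansion. First I would identify the points where $|\phi|=1$: under (Ha) this is only $\theta=\mathbf{0}$ (by the irreducibility/aperiodicity encoded in \eqref{irr} and (Ha)), whereas under (Hb) it is both $\mathbf{0}$ and $\pi\mathbf{1}$, with $\phi(\pi\mathbf{1})=-1$ because every $r$ in the support of $|L|$ is odd. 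Away from these points $|\phi(\theta)|\le 1-c$ for some $c>0$, so those portions of the integral contribute $O((1-c)^n)$, which is negligible compared to $n^{-d/2-2}$.

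Near $\theta=\mathbf{0}$ I would rescale $\theta=\eta/\sqrt{n}$ and use the symmetric Taylor expansion
\begin{equation*}
\phi(\theta) = 1 - \frac{1}{2}\inp{\theta}{\Gamma_2\theta} + \frac{1}{24}\sum_{s=1}^{d}\zeta_s(4)\theta_s^4 - \frac{1}{720}\sum_{s=1}^{d}\zeta_s(6)\theta_s^6 + O(\|\theta\|^8),
\end{equation*}
in which only even moments survive. Taking logarithm, multiplying by $n$, and re-exponentiating gives
\begin{equation*}
\phi(\eta/\sqrt{n})^n = e^{-\inp{\eta}{\Gamma_2\eta}/2}\Big[1+\frac{Q_1(\eta)}{n}+\frac{Q_2(\eta)}{n^2}+\text{remainder}\Big],
\end{equation*}
with $Q_1,Q_2$ explicit even polynomials built from $\Gamma_2,\Gamma_4,\Gamma_6$ (for instance $Q_1(\eta)$ assembles the $\eta^4$-term and the $-\frac{1}{2}(\text{quadratic})^2$ correction from $\log(1+x)$). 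Integrating termwise against $e^{-i\inp{\eta}{\zeta}}$ with $\zeta=z/\sqrt n$, using the identity
\begin{equation*}
\frac{1}{(2\pi)^d}\int_{\R^d}e^{-i\inp{\eta}{\zeta}}e^{-\inp{\eta}{\Gamma_2\eta}/2}\,d\eta = \frac{e^{-\inp{\zeta}{\Gamma_2^{-1}\zeta}/2}}{(2\pi)^{d/2}\sqrt{\det\Gamma_2}},
\end{equation*}
together with the rule $\eta_s \leftrightarrow i\partial_{\zeta_s}$ for polynomial prefactors, produces an expression of the form $\frac{(2\pi n)^{-d/2}}{\sqrt{\det\Gamma_2}}e^{-\inp{z}{\Gamma_2^{-1}z}/(2n)}\big[1 + \widetilde R_1(z/\sqrt n)/n + \widetilde R_2(z/\sqrt n)/n^2 + \cdots\big]$, where $\widetilde R_k$ is an even polynomial of degree at most $2k$.

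Since $\|z\|\le C n^\kappa$ with $\kappa<1/6$, the factor $e^{-\inp{z}{\Gamma_2^{-1}z}/(2n)}$ can itself be Taylor-expanded in $\inp{z}{\Gamma_2^{-1}z}/n$; regrouping everything by powers of $1/n$ and matching against \eqref{Conv-rate-rw} pins down $\tau_d$ in \eqref{eq-taud} from the $1/n$ coefficient, and $\Lambda_d,\chi_d$ in \eqref{eq-Lambdad}--\eqref{eq-Cd} from the $1/n^2$ coefficient. For the bipartite case (II), the neighbourhood of $\pi\mathbf{1}$ is treated by the substitution $\theta=\pi\mathbf{1}+\eta$ together with the identity $\phi(\pi\mathbf{1}+\eta)=-\phi(\eta)$, which follows from $\cos(r(\pi+\eta_s))=-\cos(r\eta_s)$ for odd $r$ and $\zeta_0=0$. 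This reproduces exactly the same local Gaussian structure multiplied by $(-1)^{n+z_1+\cdots+z_d}$, which under the parity hypothesis equals $+1$ and hence doubles the main term, giving the factor $2$ in \eqref{Conv-rate-lrw2}.

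The principal obstacle is the uniform control of remainders across three distinct error sources: (i) the Taylor remainder of $\log\phi$ on the intermediate annulus $\delta/\sqrt{n}\le \|\theta\|\le \delta$, (ii) the tail error from replacing the truncated $\eta$-domain by all of $\R^d$ after rescaling, and most tightly (iii) the leading uncaptured cross-term from expanding $e^{-\inp{z}{\Gamma_2^{-1}z}/(2n)}$, which is of order $\|z\|^6/n^3 = O(n^{6\kappa-3})$. Term (iii) is $o(n^{-2})$ precisely when $\kappa<1/6$, which fixes the hypothesis on $\kappa$ and yields the stated uniform bound on $\gamma_n(z)$. Alongside this analytic calibration, the algebraic bookkeeping is heavy: one must carry $\log\phi$ to order six, square and cube the correction polynomials, evaluate Gaussian Hermite-type integrals, and reproduce the precise combinations $\frac{1}{8}\tr{\Gamma_4\Gamma_2^{-2}} - \frac{1}{8}d(d+2)$, the matrix $\Lambda_d = \frac{1}{16}(\tr{\Gamma_4\Gamma_2^{-2}}-(d+2)(d+4))\Gamma_2^{-1}+\frac{1}{4}\Gamma_4\Gamma_2^{-3}$, and the scalar $\chi_d$ appearing in \eqref{eq-Cd}. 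This tight matching, with essentially no slack for algebraic error, is where the main computational burden lies.
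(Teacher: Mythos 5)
Your proposal follows essentially the same Fourier-inversion route as the paper's proof: restrict the inversion integral to a shrinking neighbourhood of the critical point(s) of the characteristic function $\psi$ (with the complement controlled by $\sup|\psi|<1$), rescale $\theta=\eta/\sqrt n$, expand $\log\psi$ to sixth order, re-exponentiate, and integrate against the Gaussian weight, with a separate local analysis near $\pi\mathbf 1$ giving the factor $2$ in the bipartite case. The one non-cosmetic bookkeeping difference concerns the $z$-dependence. You keep the oscillatory factor exact, invoke the closed-form Gaussian Fourier transform together with the $\eta_s\leftrightarrow i\partial_{\zeta_s}$ (Hermite) rule, and are then left to re-expand the resulting $e^{-\inp{z}{\Gamma_2^{-1}z}/(2n)}$; the first uncontrolled term is $O(\|z\|^6/n^3)=O(n^{6\kappa-3})$, giving $\kappa<1/6$. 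The paper instead Taylor-expands $\cos\!\big(\inp{\theta}{z}/\sqrt n\big)$ to fourth order from the outset, so that only polynomial expressions in $z$ ever appear, and the analogous constraint arises from the cosine remainder $|\gamma_n(\theta,z)|\le \inp{\theta}{z}^6/(720n)\le Cn^{-1+6(\delta+\kappa)}$ on the annulus $\|\theta\|\le n^{\delta}$. The two are equivalent and land on the same threshold. Likewise, for the bipartite case the paper exploits the periodicity $\cos\inp{\phi+\pi\mathbf 1}{z}\,\psi(\phi+\pi\mathbf 1)^n=\cos\inp{\phi}{z}\,\psi(\phi)^n$ to fold $[-\pi,\pi]^d$ onto a half-size region containing only $\mathbf 0$, whereas you expand directly at $\pi\mathbf 1$ via $\psi(\pi\mathbf 1+\eta)=-\psi(\eta)$; same outcome. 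One small slip, harmless to the argument: the intermediate polynomial $\widetilde R_1(\zeta)$ obtained from the Hermite transform of the degree-$4$ correction $Q_1(\eta)$ has degree $4$ in $\zeta$, not degree $2$; the degree-$2k$ bound you state is what emerges only after the final regrouping of the expansion in powers of $1/n$.
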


\begin{proof}
Consider the characteristic function $\psi$ of the random variable $L$, defined by the following 
	\begin{equation*}
	\psi(\phi ) = \E e^{\mathbf{i} \inp{\phi}{L} }=\zeta_0+ \sum_{s=1}^{d} \sum_{r=1}^{t_s}\zeta_{s,r}\cos(r\phi_s),\qquad  \phi=(\phi_1,\phi_2,\cdots,\phi_d)\in \R^d. 
	\end{equation*}  	 
	We recall  the following inversion formula (\cite[Corollary 2.2.3]{Lawler96}): 
	\begin{equation}\label{Eq-CF}
	\P (\widetilde{S}_n=z)= (2\pi)^{-d}\int_{[-\pi,\pi]^d}\cos \inp{\phi}{z}\psi(\phi)^n \rmd \phi.
	\end{equation}
	By  Taylor's expansion  about  0
	of $\psi (\phi)$,  
	\begin{equation*}
	\psi (\phi)= 1-  \frac{1}{2}\sum_{s=1}^d\sum_{r=1}^{t_s} \zeta_{s,r} r^2\phi_s^2+ O(\norme{\phi}^4)= 1-  \frac{1}{2}\sum_{s=1}^d  \zeta_{s} (2)\phi_s^2+ O(\norme{\phi}^4).
	\end{equation*}
	Then
	we  can find  an $r \in  (0,  \pi /2) $ such that
	\begin{equation*}
	|\psi (\phi)| \leq  1-\frac{1}{4} \zeta_{\diamond}\norme{\phi}^2\leq e^{- {\zeta_{\diamond}\norme{\phi}^2}/{4}},     \mbox{   for   }  \norme{\phi}\leq r,\quad \zeta_{\diamond}:= \min\{\zeta_1(2),\cdots,\zeta_d(2)\}.
	\end{equation*}
(I) First we consider the case $L \in \calA_d$.	 

By the assumption on $L$,  $\widetilde{S}_n$ is aperiodic,  therefore from \cite[Lemma 2.3.2]{Lawler2010},  
it follows that  
	\begin{equation*}
\rho:= \sup\{	|\psi(\phi)|:  \phi\in  [-\pi,\pi]^d, \norme{\phi}> r \}<1.
	\end{equation*}
	Hence
	\begin{equation}\label{SdBRWeq3-2}
	\P(\widetilde{S}_n=z)=
	\I_n(z)+
	\J_n(z),
	\end{equation}
	where
	\begin{equation*}
	\I_n(z)= (2\pi)^{-d}\int_{ \norme{\phi}\leq r} \cos \inp{\phi}{z} \Big(\psi (\phi) \Big) ^{n}
	d \phi, \qquad   \J_n(z) \leq    \rho^n. 
	\end{equation*}
	Hence we  have 
	\begin{equation*}
	\J_n(z) = \frac{1}{n^{2+d/2}} \alpha^e_n(z), \quad \mbox{with} \sup_{\{ \norme{z} \leq   C n^{\kappa}  \} } \abs{\alpha^e_n(z)} \leq   \rho^n n^{2+d/2} .
	\end{equation*}
	Changing the variable  $\theta= \sqrt{n}\phi$,  we get
	\begin{equation*}
	\I_n(z)= (2\pi)^{-d}  n^{-d/2}\int_{ \norme{\theta}\leq \sqrt{n}r} \cos\Big( \frac{1}{\sqrt{n}}\inp{\theta}{  z } \Big) \Big(\psi ( \frac{\theta}{\sqrt{n}}) \Big) ^{n}d \theta.
	\end{equation*}
	Take a constant {$\delta= {1}/{12}- \kappa/2$}.  We decompose  $\I_n$ as follows:
	\begin{equation}\label{SdBRWeq3-3}
	(2\pi\sqrt{n})^{ d} \I_n(z)= \I_{1,n}(z)+\I_{2,n}(z),
	\end{equation}
	where 
	\begin{align*}
	\I_{1,n}(z)&= \int_{n^{\delta}\leq \norme{\theta}\leq r\sqrt{n}}  \cos\Big( \frac{1}{\sqrt{n}}\inp{\theta}{ z } \Big) \Big(\psi ( \frac{\theta}{\sqrt{n}}) \Big) ^{n}d \theta,  \\
	\I_{2,n}(z)&=  \int_{ \norme{\theta}<n^{\delta} }  \cos\Big( \frac{1}{\sqrt{n}}\inp{\theta}{ z } \Big) \Big(\psi ( \frac{\theta}{\sqrt{n}}) \Big) ^{n}d \theta.
	\end{align*}
	Observe that 
	\begin{equation*}
	|\I_{1,n}(z)|   \leq    \int_{n^{\delta}\leq \norme{\theta}\leq r\sqrt{n}}   e^{- {\zeta_{\diamond}\norme{\theta}^2}/{ 4  }}  d\theta
	\leq (2r)^{d}n^{d/2}e^{-\zeta_{\diamond}n^{2\delta}/ 4  }.
	\end{equation*}
	Then we can write  that  
	\begin{equation}\label{SdBRWeq3-4}
	\I_{1,n}(z)=  \alpha^e_n(z) \frac{1}{n^{2}}\quad  \mbox{with}  \sup_{\{ \norme{z} \leq C n^{\kappa}  \} } \abs{\alpha^e_n(z)} \leq (2 r)^{d}n^{2+d/2}e^{-\zeta_{\diamond}n^{2\delta}/ 4  }.
	\end{equation}
	
	Next we pass to the analysis of the term $I_{2,n}$.
	
	Note $\delta+ \kappa<1/6$. For  $ \norme{\theta}< n^{\delta}$, $\norme{z}\leq C n^{\kappa}$,  , we have
	\begin{equation*}
	|\inp{\theta}{z} | \leq \norme{\theta}\norme{z}< C n^{\delta+\kappa}<n^{1/6}. 
	\end{equation*} 
	Then by Taylor's expansion, we have that   for $\norme{\theta}< n^{\delta},\norme{z}\leq C n^{\kappa} $,
	\begin{equation}\label{eq-cos}
	\cos\Big( \frac{1}{\sqrt{n}}\inp{\theta}{ z } \Big)= 1-\frac{1}{2n} \inp{\theta}{ z }^2+\frac{1}{24n^2} \inp{\theta}{z}^4  +  \gamma_{n}(\theta,z)\frac{1}{n^{2}} ,
	\end{equation}
	where     
\begin{equation*}
	\abs{\gamma_n(\theta,z)} \leq\dfrac{1}{720}\dfrac{\inp{\theta}{z}^6 }{  {n}}\leq C n^{- 1+6(\delta+\kappa)}.
\end{equation*} 
	By the definition of $\psi$ and  using   Taylor's expansion 
	\begin{equation*}
	\cos x= 1-\frac{1}{2}x^2+ \frac{1}{4!}x^4-\frac{1}{6!}x^6+  \frac{r_x}{8!}x^{8}, \quad  \mbox{ with } |r_x|\leq 1,  x\in \mathbb{R}, 
	\end{equation*}
	we  have 
	\begin{align*}
	\Big(\psi \big( \frac{\theta}{\sqrt{n}}\big) \Big) ^{n}=& \exp \bigg\{n\ln \bigg[ \zeta_0+ \sum_{s=1}^{d} \sum_{r=1}^{t_s}\zeta_{s,r}\cos\Big(r\frac{ \theta_s}{\sqrt{n}}\Big)\bigg]  \bigg\}\\
	=&   \exp \Bigg\{   n  \ln\Bigg[ \zeta_0+ \sum_{s=1}^d \sum_{r=1}^{t_s}\zeta_{s,r}\bigg( 1-\frac{r^2\theta_s^2}{2n} + \frac{r^4\theta_s^4}{24n^2} - \frac{r^6\theta_s^6}{6! n^3} + \beta_{s,n}(r\theta) \frac{1}{n^3}  \bigg)   \Bigg]\Bigg\} \\
	= &    \exp \Bigg\{  n  \ln\Big[  1-   \frac{1}{2n }\sum_{s=1}^{d} \zeta_s(2)  \theta_s^2 + \frac{1}{24n^2 }\sum_{s=1}^{d} \zeta_s(4)\theta_s^4 - \frac{1}{6! n^3  }\sum_{s=1}^{d}  \zeta_s(6)\theta_s^6 +  \beta_n(\theta) \frac{1}{n^3}      \Big]    \Bigg\}
	\end{align*}
	Here and below we use $\beta_n(\cdot)$ as an infinitesimal as $n$ tends to infinity, which may take different values even in the same line.
	Again using Taylor's expansion
	\begin{equation*}
	\ln(1+ w)=  w-\frac{1}{2}w^2+\frac{1}{3}w^3-\frac{r_w}{4}w^4,\quad  |r_w|< 1, |w|<1,
	\end{equation*}
	we have 
	\begin{align*}
	&\Big(\psi \big( \frac{\theta}{\sqrt{n}}\big) \Big) ^{n}= \exp \Bigg\{  n \bigg[  -   \frac{1}{2n }\sum_{s=1}^d \zeta_s (2) \theta_s^2  + \frac{1}{24n^2} \sum_{s=1}^d \zeta_s(2)\theta_s^4 - \frac{1}{6! n^3 }  \sum_{s=1}^d  \zeta_s(6)\theta_s^6 \\
	& \qquad - \frac{1}{2} \Big(  -   \frac{1}{2n }\sum_{s=1}^d \zeta_s(2)  \theta_s^2 + \frac{1}{24n^2 } \sum_{s=1}^d  \zeta_s(4)\theta_s^4  \Big)^2  +\frac{1}{3} \Big(-   \frac{1}{2n }\sum_{s=1}^d \zeta_s(2)  \theta_s^2 \Big)^3 +   \beta_n(\theta) \frac{1}{n^3} \bigg]\Bigg\}
	\\ &\quad= \exp  \bigg\{ -   \frac{1}{2}\sum_{s=1}^d \zeta_s(2)  \theta_s^2\bigg\}\exp  \Bigg\{  \frac{1}{n}  \bigg[    \frac{1}{24 } \sum_{s=1}^d \zeta_s(4)\theta_s^4 -\frac{1}{8 }{\Big(\sum_{s=1}^d \zeta_s (2) \theta_s^2\Big)^2 }\bigg]
	\\
	&\qquad     +\frac{1}{n^2} \bigg[  \frac{1}{48}{\sum_{s=1}^d \zeta_s(2)  \theta_s^2} \sum_{s=1}^d \zeta_s(4)\theta_s^4  -  \frac{1}{6! }  \sum_{s=1}^d \zeta_s(6)\theta_s^6- \frac{1}{24} {\Big(\sum_{s=1}^d \zeta_s(2)  \theta_s^2\Big)^3}\bigg] + \beta_n(\theta)\frac{1}{n^2} \Bigg\}.
	\end{align*}
	Once again using Taylor' expansion 
	\begin{equation*}
	e^{x}= 1+x+\frac{1}{2}x^2+ \frac{e^{r_x}}{3} x^3,\quad  |r_x|<|x|,  x\in \mathbb{R},  
	\end{equation*}
	we get 
	\begin{multline}\label{eq-psi}
	\Big(\psi \big( \frac{\theta}{\sqrt{n}}\big) \Big) ^{n}=\exp\bigg\{\!\!-\frac{1}{2}\sum_{s=1}^d \zeta_s(2)  \theta_s^2\bigg\} \Bigg\{  1+  \frac{1}{n}  \bigg[    \frac{1}{24 } \sum_{s=1}^d \zeta_s(4)\theta_s^4 -\frac{1}{8 }{\Big(\sum_{s=1}^d \zeta_s(2)  \theta_s^2\Big)^2 }\bigg]   
	\\
\qquad\qquad	+\frac{1}{n^2} \bigg[ - \frac{1}{24 } {\Big(\sum_{s=1}^d \zeta_s(2)  {\theta_j}^2\Big)^3}
	-  \frac{1}{6! }  \sum_{s=1}^d \zeta_s(6)\theta_s^6+  \frac{1}{48 }{\sum_{s=1}^d \zeta_s(2)  \theta_s^2} \sum_{s=1}^d \zeta_s(4)\theta_s^4 
	\\
	+ \frac{1}{2} \bigg(      \frac{1}{24 } \sum_{s=1}^d \zeta_s(4)\theta_s^4  -\frac{1}{8 }{\Big(\sum_{s=1}^d \zeta_s (2) \theta_s^2\Big)^2 }\bigg) ^2 \bigg]	
	+   \beta_n(\theta)\frac{1}{n^2}  \Bigg \},
	\end{multline}
	where all    $\beta_n(\theta)$ are infinitesimals satisfying  {$\sup_{\norme{\theta}\leq n^{\delta}}\abs{\beta_n(\theta)}\leq C n^{-1+10\delta}.$}
	
	Combining \eqref{eq-cos} and \eqref{eq-psi}, we get
	\begin{multline}\label{eq-cospsi}
	\cos\Big( \frac{1}{\sqrt{n}}\inp{\theta}{z} \Big) \Big(\psi ( \frac{\theta}{\sqrt{n}}) \Big) ^{n} 
	\\
	= \exp\bigg\{\!\!-\frac{1}{2}\sum_{s=1}^d \zeta_s(2)  \theta_s^2\bigg\} \Bigg\{  1+  \frac{1}{n}  \bigg[   \frac{1}{24 } \sum_{s=1}^d \zeta_s(4)\theta_s^4 -\frac{1}{8 }{\Big(\sum_{s=1}^d \zeta_s(2)   \theta_s^2\Big)^2 } - \frac{\inp{\theta}{z}^2}{2}\bigg]\\  +\frac{1}{n^2} \bigg[    \frac{1}{48 }{\sum_{s=1}^d \zeta_s (2) \theta_s^2} \sum_{s=1}^d \zeta_s(4)\theta_s^4 
	+ \frac{1}{2} \bigg(      \frac{1}{24 } \sum_{s=1}^d \zeta_s(4)\theta_s^4 -\frac{1}{8 }{\Big(\sum_{s=1}^d \zeta_s(2)  \theta_s^2\Big)^2 }\bigg) ^2 
	-  \frac{1}{6! }  \sum_{s=1}^d \zeta_s(6)\theta_s^6
	\\
- \frac{1}{24 } {\Big(\sum_{s=1}^d \zeta_s(2)  \theta_s^2\Big)^3} 
	-  \frac{\inp{\theta}{z}^2 }{2} \Big(\frac{1}{24 } \sum_{s=1}^d \zeta_s(4)\theta_j^4 -\frac{1}{8 }{\Big(\sum_{s=1}^d \zeta_s(2)  {\theta_s}^2\Big)^2 }\Big)   + \frac{\inp{\theta}{z}^4 }{24}   \bigg]  
	\\ +    \widetilde{\gamma}_n(\theta,z)   \frac{1}{n^2}      \Bigg\},
	\end{multline}
	where  the term $\widetilde{\gamma}_n(\theta,z)$ satisfies
	\begin{equation*}
	\sup_{\{ \norme{z} \leq C n^{\kappa}, \norme{\theta}\leq n^{\delta}  \} } \abs{ \widetilde{\gamma}_n(\theta,z)}\leq C n^{-1+ \max\{6(\delta+\kappa), 10\delta \}}.
	\end{equation*}  
	Then we see that 
	\begin{equation*}
	\sup_{\{ \norme{z} \leq C n^{\kappa}  \} }\abs{\int_{\norme{\theta}< n^{\delta}}     { \widetilde{\gamma}_n(\theta,z)} e^{- \frac{1}{2}\sum_{s=1}^d \zeta_s(2)\theta^2 } d\theta} \leq    C n^{-1+ \max\{6(\delta+\kappa),10\delta \}}.
	\end{equation*}
By elementary but tedious calculus, we obtain the following results:
	\begin{align*}
	\mathbf{1}. ~&  \int_{\norme{\theta}<n^{\delta}} \exp\bigg\{\!\!-\frac{1}{2}\sum_{s=1}^d \zeta_s(2)  \theta_s^2\bigg\} \inp{\theta}{z}^2\mathrm{d} \theta= \dfrac{ (2\pi)^{d/2}}{\sqrt{ \det \Gamma_2}} \inp{z}{\Gamma_2^{-1}z}+     \alpha^e_{n}(z)  \frac{1}{n^2},
	\\
	\mathbf{2}. ~&\int_{\norme{\theta}<n^{\delta}} \exp\bigg\{\!\!-\frac{1}{2}\sum_{s=1}^d \zeta_s(2)  \theta_s^2\bigg\} \inp{\theta}{z}^4\mathrm{d} \theta= \dfrac{ (2\pi)^{d/2}}{\sqrt{ \det \Gamma_2}} \cdot 3 \inp{z}{\Gamma_2^{-1}z}^2+     \alpha^e_{n}(z)  \frac{1}{n^2},
	\\
	\mathbf{3}. ~&\int_{\norme{\theta}<n^{\delta}} \exp\bigg\{\!\!-\frac{1}{2}\sum_{s=1}^d \zeta_s(2)  \theta_s^2\bigg\} \bigg(\sum_{s=1}^d \zeta_s (2)\theta_s^2\bigg)^2\inp{\theta}{z}^2\mathrm{d} \theta
	\\ 
	& \qquad \qquad\qquad  = \dfrac{ (2\pi)^{d/2}}{\sqrt{ \det \Gamma_2}} \cdot   (d+2)(d+4) \inp{z}{\Gamma_2^{-1}z}+     \alpha^e_{n}(z)  \frac{1}{n^2},
	\\
	\mathbf{4}. ~&\int_{\norme{\theta}<n^{\delta}} \exp\bigg\{\!\!-\frac{1}{2}\sum_{s=1}^d \zeta_s(2)  \theta_s^2\bigg\} \bigg( \sum_{s=1}^d \zeta_s (4)\theta_s^4  \bigg)\inp{\theta}{z}^2\mathrm{d} \theta 
	\\ & \qquad \qquad\qquad  = \dfrac{ (2\pi)^{d/2}}{\sqrt{ \det \Gamma_2}}\cdot 3\Big(4\inp{z}{\Gamma_4 \Gamma_2 ^{-3}z}+\tr{\Gamma_4 \Gamma_2^{-2} } \inp{z}{\Gamma_2^{-1}z}\Big ) +     \alpha^e_{n}(z)  \frac{1}{n^2},
	\\ 
\mathbf{5}.	~&\int_{\norme{\theta}<n^{\delta}} \exp\bigg\{\!\!-\frac{1}{2}\sum_{s=1}^d \zeta_s(2)  \theta_s^2\bigg\} \mathrm{d} \theta
	= \dfrac{ (2\pi)^{d/2}}{\sqrt{ \det \Gamma_2}}+\varepsilon^e_n  \frac{1}{n^2},  
	\\
\mathbf{6}.	~&\int_{\norme{\theta}<n^{\delta}} \exp\bigg\{\!\!-\frac{1}{2}\sum_{s=1}^d \zeta_s(2)  \theta_s^2\bigg\} \bigg( \sum_{s=1}^d \zeta_s(4) \theta_s^4  \bigg)\mathrm{d} \theta
	=\dfrac{ (2\pi)^{d/2}}{\sqrt{ \det \Gamma_2}} \cdot 3 \tr{\Gamma_4  \Gamma_2 ^{-2}}+\varepsilon^e_n  \frac{1}{n^2}  ,
	\\
\mathbf{7}.~	&\int_{\norme{\theta}<n^{\delta}} \exp\bigg\{\!\!-\frac{1}{2}\sum_{s=1}^d \zeta_s(2)  \theta_s^2\bigg\} \bigg(\sum_{s=1}^d \zeta_s(2) \theta_s^2\bigg)^2\mathrm{d} \theta
	= \dfrac{ (2\pi)^{d/2}}{\sqrt{ \det \Gamma_2}}\cdot  d(d+2) +\varepsilon^e_n  \frac{1}{n^2}  ,
	\\
\mathbf{8}.	~&\int_{\norme{\theta}<n^{\delta}} \exp\bigg\{\!\!-\frac{1}{2}\sum_{s=1}^d \zeta_s(2)  \theta_s^2\bigg\} \bigg( \sum_{s=1}^d \zeta_s (2)\theta_s^2  \sum_{s=1}^d \zeta_s(4) \theta_s^4\bigg)\mathrm{d} \theta
	\\
	&\qquad\qquad\qquad= \dfrac{ (2\pi)^{d/2}}{\sqrt{ \det \Gamma_2}} \cdot 3(d+4)\tr{\Gamma_4\Gamma_2^{-2}} +\varepsilon^e_n  \frac{1}{n^2}  ,
	\\ 
\mathbf{9}.	~&\int_{\norme{\theta}<n^{\delta}} \exp\bigg\{\!\!-\frac{1}{2}\sum_{s=1}^d \zeta_s(2)  \theta_s^2\bigg\} \bigg(   \sum_{s=1}^d \zeta_s (6)\theta_s^6\bigg) \rmd\theta
	=   \dfrac{ (2\pi)^{d/2}}{\sqrt{ \det \Gamma_2}}\cdot  15 \tr{ \Gamma_6\Gamma_2^{-3} }+\varepsilon^e_n  \frac{1}{n^2}  ,
	\\ 
\mathbf{10}.~	&\int_{\norme{\theta}<n^{\delta}}\exp\bigg\{\!\!-\frac{1}{2}\sum_{s=1}^d \zeta_s(2)  \theta_s^2\bigg\} \bigg( \sum_{s=1}^d \zeta_s (2)\theta_s^2   \bigg)^3\mathrm{d} \theta
	= \dfrac{ (2\pi)^{d/2}}{\sqrt{ \det \Gamma_2}}\cdot d(d+2)(d+4)+\varepsilon^e_n  \frac{1}{n^2}  ,
	\\ 
\mathbf{11}.~	&
	\int_{\norme{\theta}<n^{\delta}}\exp\bigg\{\!\!-\frac{1}{2}\sum_{s=1}^d \zeta_s(2)  \theta_s^2\bigg\} \bigg(  \sum_{s=1}^d \zeta_s (2)\theta_s^2\bigg)^4\mathrm{d} \theta
\\
&\qquad\qquad\qquad
	= \dfrac{ (2\pi)^{d/2}}{\sqrt{ \det \Gamma_2}} \cdot d(d+2)(d+4)(d+6)+\varepsilon^e_n  \frac{1}{n^2}    ,
	\\
\mathbf{12}.~	&  
	\int_{\norme{\theta}<n^{\delta}} \exp\bigg\{\!\!-\frac{1}{2}\sum_{s=1}^d \zeta_s(2)  \theta_s^2\bigg\} \bigg( \sum_{s=1}^d \zeta_s (4)\theta_s^4   \bigg)^2\mathrm{d} \theta
	\\
	&\qquad\qquad\qquad
	= \dfrac{ (2\pi)^{d/2}}{\sqrt{ \det \Gamma_2}}  \cdot\Big[   96 \tr{ \Gamma_4^2\Gamma_2^{-4} } + 9 \big(\tr{\Gamma_4\Gamma_2^{-2}}\big)^2\Big]+\varepsilon^e_n  \frac{1}{n^2}   ,
	\\  
\mathbf{13}.~	&
	\int_{\norme{\theta}<n^{\delta}} \exp\bigg\{\!\!-\frac{1}{2}\sum_{s=1}^d \zeta_s(2)  \theta_s^2\bigg\} \bigg( \sum_{s=1}^d \zeta_s (4)\theta_s^4  \bigg)\bigg(\sum_{s=1}^d \zeta_s(2) \theta_s^2\bigg)^2\mathrm{d} \theta
	\\
	& \qquad\qquad \qquad
	= \dfrac{ (2\pi)^{d/2}}{\sqrt{ \det \Gamma_2}}   \cdot 3(d+4) (d+6)\tr{\Gamma_4\Gamma_2^{-2}}+\varepsilon^e_n  \frac{1}{n^2}. 
	\end{align*}
	In the above estimates, the infinitesimals $ \alpha^e_{n}(x)$  and  $\varepsilon^{e}_n$ are uniformly bounded by  $ C   e^{-n^{ \delta/2} }$,  that is, 
	\begin{equation*}
	\sup_{\norme{z}\leq C n^{\kappa}}|\alpha^e_{n}(z)| \leq  C   e^{-n^{ \delta/2} },\qquad   \varepsilon^e_n\leq C   e^{-n^{ \delta/2} }. 
	\end{equation*}    
	Recall that  
	\begin{equation*}
	\I_{2,n}(z) =  \int_{ \norme{\theta}<n^{\delta} }  \cos\Big( \frac{1}{\sqrt{n}}\inp{\theta}{ z } \Big) \Big(\psi ( \frac{\theta}{\sqrt{n}}) \Big) ^{n}d \theta.
	\end{equation*}
	By using the above integrals   and \eqref{eq-cospsi},  we  have 
	\begin{multline*} 
	\I_{2,n} (z)=  \dfrac{ (2\pi)^{d/2}}{\sqrt{ \det \Gamma_2}}  \Bigg\lbrace 1+ \dfrac{1}{n}  \bigg[\dfrac{1}{8}\Big(\mathrm{tr} (\Gamma_4\Gamma_2^{-2}) -d(d+2)\Big)   -\dfrac{1}{2} \inp{z}{\Gamma_2^{-1}z}  \bigg]   + \dfrac{1}{n^2}	\bigg[ \frac{1}{8} \inp{z}{\Gamma_2^{-1}z}  ^2
	\\ 
	- \dfrac{1}{16}\Big(  \tr{\Gamma_4\Gamma_2^{-2}} -(d+2)(d+4)\Big)\inp{\Gamma_2^{-1}z}{z} -\frac{1}{4}\inp{ \Gamma_4 \Gamma_2^{-3} z}{ z}       -\dfrac{1}{64}(d+2)(d+4)\tr{\Gamma_4\Gamma_2^{-2}} \\+ \frac{1}{12}\tr{\Gamma_4^2\Gamma_2^{-4}}+\frac{1}{128}\Big(\tr{\Gamma_4\Gamma_2^{-2}}\Big)^2 -\frac{1}{48}\tr{\Gamma_6\Gamma_2^{-3}}
	+\dfrac{1}{384}d(d+2)(d+4) (3d+2) \bigg]  + \frac{1}{n^{2}}   \alpha_{n}(z)\Bigg\rbrace
	,
	\end{multline*}
	where the term $\alpha_n(z) $ satisfies  
	\begin{equation}\label{eq-r}
	\sup_{\{ \norme{z} \leq  C n^{\kappa}  \} } \abs{\alpha_n(z) }\leq C n^{-1+ \max\{6(\delta+\kappa),10\delta \}}.
	\end{equation}
	With the notation $\tau_d$, $\Lambda_d$ and $\chi_d$ defined in \eqref{eq-taud}, \eqref{eq-Lambdad} and \eqref{eq-Cd},
	we get 
	\begin{multline}\label{SdBRWeq2-8}
	\I_{2,n} (z)=  \dfrac{ (2\pi)^{d/2}}{\sqrt{ \det \Gamma_2}}  \Bigg\lbrace 1+ \dfrac{1}{n}  \bigg[ \tau_d   -\dfrac{1}{2} \inp{z}{\Gamma_2^{-1}z}  \bigg]   
	+ \dfrac{1}{n^2}	\bigg[ \frac{1}{8} \inp{z}{\Gamma_2^{-1}z}  ^2
	\\ 
	-\inp{ \Lambda_d z}{ z}     +\chi_d \bigg]  + \frac{1}{n^{2}}   \alpha_{n}(z)\Bigg\rbrace,
	\end{multline}
	where the term $\alpha_n(z) $ satisfies   \eqref{eq-r}.
	
	Combining  the equations
	\eqref{SdBRWeq3-2} --\eqref{SdBRWeq2-8}, we obtain the desired result  \eqref{Conv-rate-rw}.  
\\
(II) Now we deal with the case   $L \in \calB_d$. 

In this case, it holds that  $\zeta_0=0$ and 
the set $\{ r:  \zeta_{s,r}>0,  s=1,2,\cdots,d\}$  only contains odd numbers.
Thus  the integrand of \eqref{Eq-CF} satisfies 
\begin{equation*}
\cos \inp{\phi+\pi\mathbf{1} }{z}\psi(\phi+\pi\mathbf{1})^n= (-1)^{n+ z_1+\cdots +z_d}\cos \inp{\phi}{z}\psi(\phi)^n =\cos \inp{\phi}{z}\psi(\phi)^n.
\end{equation*}
Therefore  \eqref{Eq-CF}  turns out to be 
\begin{equation}\label{Eq-bi}
\P (\widetilde{S}_n=z)= 2(2\pi)^{-d}\int_{A}\cos \inp{\phi}{z}\psi(\phi)^n \rmd \phi,
\end{equation}
where $A=[-\pi/2,\pi/2] \times [-\pi,\pi]^{d-1}$.
Observe that 
\begin{equation*}
\tilde{\rho}:= \sup\{	|\psi(\phi)|:  \phi\in  A, \norme{\phi}> r \}<1.
\end{equation*}
Hence
\begin{multline}\label{SdBRWeq3-2a}
\P(\widetilde{S}_n=z)= \tilde{I}_n + \tilde{J}_n
:= 2 (2\pi)^{-d}\int_{ \norme{\phi}\leq r} \cos \inp{\phi}{z} \Big(\psi (\phi) \Big) ^{n}
 d \phi
 \\+   2 (2\pi)^{-d}\int_{ A\backslash \{ \phi: \norme{\phi}\leq r\}} \cos \inp{\phi}{z} \Big(\psi (\phi) \Big) ^{n}
 d \phi,
\end{multline}
where $|\J_n(z)| \leq    \tilde{\rho}^n.$
Then following the same procedures as in (I), we can obtain  the desired \eqref{Conv-rate-lrw2}.
 \end{proof}

\section{Proofs of   Theorem  \ref{SBRW-Th}  and Corollary \ref{SBRW-Cor}}\label{Sec4}

We first introduce some notation.
As usual, we write $\N^* = \{1,2,3,\cdots \}$ and denote by
$$ U= \bigcup_{n=0}^{\infty} (\N^*)^n $$
the set of all finite sequences, where $(\N^*)^0=\{\varnothing \}$ contains the null sequence $ \varnothing$.

For all $u\in U$, let $\mathbb{T}(u)$ be the shifted tree of $\mathbb{T}$ at $u$  with defining elements $\{N_{uv}\}$: we have
1) $\varnothing \in \mathbb{T}(u)$, 2) $vi\in \mathbb{T}(u)\Rightarrow v\in \mathbb{T}(u)$ and  3) if  $v\in \mathbb{T}(u)$, then $vi\in \mathbb{T}(u)$ if and only if $1\leq i\leq N_{uv} $. Define $\mathbb{T}_n(u)=\{v\in \mathbb{T}(u): |v|=n\}$. Recall that $\mathbb{T}=\mathbb{T}(\varnothing)$ and $\mathbb{T}_n=\mathbb{T}_n(\varnothing)$.

Let $\kappa$ be a real number  satisfying
$  \frac{d+6}{2\lambda} <\kappa<\frac{1}{6}
$
and  set $k_n=\lfloor n^{\kappa}\rfloor$, the largest integer not bigger than $n^{\kappa}$.

From the additivity property of the branching process, it follows that 
\begin{equation}\label{dec}
Z_n(z)= \sum_{u\in \T_{k_n} } \sum_{v\in \mathbb{T}_{n-k_n}(u) } \ind{S_{uv}=z},
\end{equation}
and \begin{equation*}
\E_{\D_{k_n} } \left( \sum_{v\in \mathbb{T}_{n-k_n}(u) } \ind{S_{uv}=z} \right)
=m^{n-k_n}  \P\Big(\widetilde{S}_{n-k_n}=z-y\Big)\Big|_{y=S_u}. 
\end{equation*}
Then 
we have the following decomposition:
\begin{multline}\label{SBRW-eq4}
\frac{Z_n(z)}{ m^n  } =\frac{1}{ m^{k_n} } \sum_{u\in \T_{k_n}}  \Bigg(  \frac{\sum_{v\in \mathbb{T}_{n-k_n}(u) } \ind{S_{uv}=z} }{m^{n-k_n}} -  \P(\widetilde{S}_{n-k_n}=z-y)\Big|_{y=S_u}  \Bigg) \\ +  \frac{1}{ m^{k_n} } \sum_{u\in \T_{k_n}} \P(\widetilde{S}_{n-k_n}=z-y)\Big|_{y=S_u}=:  \mathbb{D}_{1,n} + \mathbb{D}_{2,n}.
\end{multline}


So Theorem  \ref{SBRW-Th}  will
 follows from the following two  lemmas:

\begin{lemma}\label{SBRW-Lem1} 
	Assume  that the conditions \eqref{SC} and \eqref{NE} hold,    $ \E N(\ln N)^{1+\lambda}<\infty$ for some  $\lambda>3(d+6)$ and $L$ has the law  \eqref{SRW}. Then we have
	\begin{equation}\label{SBRW-eq5}
	n^{2+\frac{d}{2}}\mathbb{D}_{1,n}  \xrightarrow{n \rightarrow \infty } 0 \quad   \mbox{ a.s.}
	\end{equation}
\end{lemma}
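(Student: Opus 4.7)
The plan is to use the branching property to write $\mathbb{D}_{1,n}$ as a conditionally centered, independent sum, and then apply Borel--Cantelli with a moment bound tuned to the logarithmic moment hypothesis. Conditionally on $\mathscr{D}_{k_n}$, the shifted subtrees $\{\mathbb{T}(u):u\in\mathbb{T}_{k_n}\}$ are independent copies of the branching random walk, so setting
\[
Y_u := \frac{1}{m^{n-k_n}}\sum_{v\in \mathbb{T}_{n-k_n}(u)}\mathbf{1}_{\{S_{uv}=z\}}\;-\;\mathbb{P}\big(\widetilde{S}_{n-k_n}=z-S_u\big),
\]
the random variables $\{Y_u\}_{u\in\mathbb{T}_{k_n}}$ are conditionally independent with mean zero, and $\mathbb{D}_{1,n}=m^{-k_n}\sum_{u\in\mathbb{T}_{k_n}} Y_u$. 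The goal is to show that $n^{2+d/2}\mathbb{D}_{1,n}\to 0$ almost surely. Two orders of magnitude drive the heuristic: $|\mathbb{T}_{k_n}|\asymp m^{k_n}$ grows faster than any polynomial in $n$ (since $k_n=\lfloor n^{\kappa}\rfloor$), while the typical size of each $Y_u$ is of order $(n-k_n)^{-d/2}$ by the local limit theorem proved as Theorem~\ref{Th2}.

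\textbf{Moment bound via truncation.} A direct Chebyshev argument on the conditional variance would need $\mathbb{E} N^{2}<\infty$, which is \emph{not} available under the standing hypothesis $\mathbb{E}N(\ln N)^{1+\lambda}<\infty$. I would instead introduce a truncation at a level $A_n$ (to be chosen, e.g., a suitable power of $m^{k_n}$) and bound separately (i) the truncated contribution obtained by replacing each offspring variable $N_v$ for $v$ in generations $k_n,\dots,n-1$ by $N_v\wedge A_n$, and (ii) the untruncated tail. For (i), the many-to-two identity applied to the truncated branching random walk yields
\[
\mathrm{Var}\Big(m^{-k_n}\!\!\sum_{u\in \mathbb{T}_{k_n}}\!Y_u^{\leq}\,\Big|\,\mathscr{D}_{k_n}\Big)\;\le\;\frac{C\,\mathbb{E}[(N\wedge A_n)^{2}]}{m^{k_n}\,(n-k_n)^{d/2}},
\]
after which Chebyshev plus Borel--Cantelli handle this piece. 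For (ii), the tail estimate $\mathbb{P}(N>t)\le C/(t(\ln t)^{1+\lambda})$, which follows from $\mathbb{E}N(\ln N)^{1+\lambda}<\infty$, controls the probability that any offspring in the relevant generations actually exceeds $A_n$.

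\textbf{Main obstacle.} The real difficulty is the joint balancing of $A_n$ against two competing constraints: $\mathbb{E}[(N\wedge A_n)^{2}]$ should not be too large (which prefers small $A_n$), yet $\mathbb{P}(N>A_n)$ multiplied by the number of affected vertices (of order $m^{k_n}(n-k_n)$ in the worst case) must be summable in $n$ (which prefers large $A_n$). The hypothesis $\lambda>3(d+6)$ together with the standing choice $\kappa>(d+6)/(2\lambda)$ is calibrated exactly so that there exists a feasible $A_n$, for instance of the form $A_n=m^{\alpha k_n}$ with $\alpha$ small, for which both pieces give bounds summable even after multiplication by the blowing-up factor $n^{d+4}$ produced by Chebyshev against the target rate $n^{-(2+d/2)}$. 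Once both are in hand, Borel--Cantelli yields $n^{2+d/2}\mathbb{D}_{1,n}\to 0$ almost surely, which is the conclusion of Lemma~\ref{SBRW-Lem1}.
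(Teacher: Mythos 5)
Your decomposition of $\mathbb{D}_{1,n}$ into conditionally centered, conditionally independent summands $Y_u$ is exactly the paper's starting point, and your instinct that a plain second-moment/Chebyshev bound fails without $\mathbb{E}N^2<\infty$ is correct. However, the specific fix you propose --- truncating the offspring variables $N_v$ for all $v$ in generations $k_n,\dots,n-1$ and bounding the complementary event by a union bound --- has a genuine quantitative error that breaks the argument. The vertices that could trigger the untruncated event lie in \emph{every} generation between $k_n$ and $n$, and their expected count is $\sum_{j=k_n}^{n-1} m^j \asymp m^{n-1}$, which is \emph{exponential in $n$}, not $m^{k_n}(n-k_n)$ as you assert. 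Multiplying $\mathbb{P}(N>A_n)$ by $m^{n}$ is never summable in $n$ unless $A_n$ grows super-exponentially, at which point the truncated second moment $\mathbb{E}[(N\wedge A_n)^2]$ is uncontrolled. No choice of $A_n$ resolves this tension, so the tail piece of your plan cannot close.

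The paper sidesteps this entirely by truncating a \emph{different} object: it truncates the aggregated subtree quantities $X_{n,u}$ themselves at level $m^{k_n}$, and there are only $|\mathbb{T}_{k_n}|$ of these (expected number $m^{k_n}$). The pointwise domination $|X_{n,u}|\le W_{n-k_n}(u)+1$, where $W_{n-k_n}(u)$ is a copy of the normalized population martingale, makes the tail tractable: the Bingham--Doney bound $\mathbb{E}\big[(W^*+1)(\log(W^*+1))^\lambda\big]<\infty$ (Lemma \ref{LTABWTlem4}) converts the truncation-failure probability into a $k_n^{-\lambda}$ decay, which is summable thanks to $\lambda\kappa>1$. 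For the fluctuation term, the paper does not use Chebyshev at all; it invokes Biggins' $L^\alpha$-inequality for $1<\alpha<2$ (Marcinkiewicz--Zygmund type), precisely because it needs a moment index strictly between $1$ and $2$ to exploit a logarithmic moment rather than a second moment. If you want to complete your argument you should replace the offspring truncation by this ``martingale-domination plus $L^\alpha$'' mechanism; the many-to-two identity is not needed.
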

\begin{lemma}\label{SBRW-Lem2}
	Assume  that the conditions \eqref{SC} and \eqref{NE} hold,    $ \E N(\ln N)^{1+\lambda}<\infty$ for some  $\lambda>3(d+6)$ and $L$ has the law  \eqref{SRW}. Then  as $n \rightarrow \infty$,   a.s. 
\\
(I) when $L \in \calA_d$, 	
\begin{equation}\label{SBRW-eq6}
\mathbb{D}_{2,n} =\dfrac{ (2\pi n)^{-d/2}}{ \sqrt{ \det \Gamma_2} }    
\bigg [W + \frac{1}{n}  F_1(z) +\frac{1}{n^2} F_2(z) \bigg]+  \frac{1}{n^{2+d/2}}o(1);
\end{equation}
(II) when $L \in \calB_d$,  provided $ n\equiv z_1+z_2+\cdots+z_d\, (\,\mathrm{ mod }~  2)$, 
\begin{equation}\label{SBRW-eq6a}
\mathbb{D}_{2,n} =\dfrac{2(2\pi n)^{-d/2}}{ \sqrt{ \det \Gamma_2} }    
\bigg[   W + \frac{1}{n}  F_1(z)+\frac{1}{n^2}  F_2(z)  \bigg]  +  \frac{1}{n^{2+d/2}}o(1),   
\end{equation}
	where   $F_1(z)$ and $F_2(z)$ are defined by \eqref{f1}   and \eqref{f2}  respectively.

\end{lemma}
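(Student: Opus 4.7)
My plan is to apply Theorem \ref{Th2} to each probability $\P(\widetilde{S}_{n-k_n}=z-y)|_{y=S_u}$ in the definition of $\mathbb{D}_{2,n}$ and to identify the resulting averages over $u\in\T_{k_n}$ in terms of the martingales from Theorem \ref{thmCRM}.

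\emph{Step 1: Applicability of the LLT.} For $u\in\T_{k_n}$ one has $\norme{S_u}\leq C k_n=C n^{\kappa}$ deterministically, hence $\norme{z-S_u}=O(n^{\kappa})$ uniformly in $u$ for each fixed $z$. In the bipartite case, every jump of $L$ flips the parity of the coordinate sum, so $(S_u)_1+\cdots+(S_u)_d\equiv k_n\pmod 2$; together with the hypothesis $n\equiv z_1+\cdots+z_d\pmod 2$, this gives $n-k_n\equiv(z-S_u)_1+\cdots+(z-S_u)_d\pmod 2$, the parity assumption of Theorem \ref{Th2}(II). Theorem \ref{Th2} thus applies uniformly over $u\in\T_{k_n}$, yielding
\begin{equation*}
\P(\widetilde{S}_{n-k_n}=z-S_u)=\tfrac{c_L(2\pi(n-k_n))^{-d/2}}{\sqrt{\det\Gamma_2}}\Bigl[1+\tfrac{A(z-S_u)}{n-k_n}+\tfrac{B(z-S_u)}{(n-k_n)^2}\Bigr]+\tfrac{\gamma_{n-k_n}(z-S_u)}{(n-k_n)^{d/2+2}},
\end{equation*}
with $A$, $B$ the polynomials in the braces of \eqref{Conv-rate-rw} and $c_L=1$ (resp.\ $2$) in case (I) (resp.\ (II)). Summing and dividing by $m^{k_n}$, the $\gamma$-remainder is $W_{k_n}\cdot o(n^{-d/2-2})$ by \eqref{small}.

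\emph{Step 2: Martingale reduction.} Writing $\overline{A}_{k_n}(z):=m^{-k_n}\sum_{u\in\T_{k_n}}A(z-S_u)$ and likewise $\overline{B}_{k_n}(z)$, I expand $\inp{z-S_u}{\Gamma_2^{-1}(z-S_u)}$ and its square by the binomial theorem. The diagonal structure of \eqref{SRW} makes the cross-coordinate averages $m^{-k_n}\sum_u(S_u)_s(S_u)_t$ vanish for $s\neq t$, so only diagonal moments survive. Using the identities
\begin{equation*}
m^{-k_n}\!\sum_u\!\inp{S_u}{\Gamma_2^{-1}S_u}=\inp{\Gamma_2^{-1}\mathbf{1}}{N_{2,k_n}}+dk_n W_{k_n},\ m^{-k_n}\!\sum_u\!\inp{S_u}{\Gamma_2^{-1}z}^2=N_{2,k_n}^z+k_n\inp{z}{\Gamma_2^{-1}z}W_{k_n},
\end{equation*}
together with the analogous decompositions of $\sum_u\inp{S_u}{\Gamma_2^{-1}S_u}S_u$ (involving $N_{3,k_n}$) and $\sum_u\inp{S_u}{\Gamma_2^{-1}S_u}^2$ (involving $N_{4,k_n}$), both $\overline{A}_{k_n}$ and $\overline{B}_{k_n}$ become linear combinations of $W_{k_n},N_{1,k_n},N_{2,k_n},N_{2,k_n}^z,N_{3,k_n},N_{4,k_n}$ plus explicit polynomial-in-$k_n$ compensators attached to $W_{k_n}$: in particular $-dk_n/2$ appears in $\overline{A}_{k_n}$, and $d(d+2)k_n^2+k_n\bigl(\tr{\Gamma_4\Gamma_2^{-2}}-d(d+2)\bigr)$ in $\overline{B}_{k_n}$.

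\emph{Step 3: Cancellation of the $k_n$-compensators and passage to the limit.} I then multiply by $c_L(2\pi(n-k_n))^{-d/2}/\sqrt{\det\Gamma_2}$ and expand $(1-k_n/n)^{-j}=1+jk_n/n+\binom{j+1}{2}k_n^2/n^2+O(k_n^3/n^3)$ for $j\in\{d/2,d/2+1,d/2+2\}$. The $k_n$-compensators from Step 2 cancel telescopically against the Taylor coefficients of $(1-k_n/n)^{-d/2-j}$; this is precisely what forces the definitions of $\tau_d$, $\Lambda_d$, $\chi_d$ in \eqref{eq-taud}--\eqref{eq-Cd}, and the residual $O(k_n^3/n^{d/2+3})$ is $o(n^{-d/2-2})$ because $3\kappa<1/2$. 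What survives is \eqref{SBRW-eq6}/\eqref{SBRW-eq6a} with $W_{k_n}$ and the $N_{\cdot,k_n}$ in place of $W$ and the $\calV_{\cdot}$. By Asmussen's theorem $W_{k_n}-W=o(k_n^{-\lambda})$ and by Theorem \ref{thmCRM} $N_{j,k_n}-\calV_j=o(k_n^{-(\lambda-3)})$ (and similarly for $N_{2,k_n}^z-\calV_2^z$); the hypotheses $\kappa>(d+6)/(2\lambda)$ and $\kappa<1/6$ give $\kappa\lambda>(d+6)/2$ and $\kappa(\lambda-3)>(d+5)/2$, so these replacement errors are $o(n^{-d/2-2})$ after multiplication by the respective $O(n^{-d/2-j})$ prefactors ($j=0,1,2$).

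\emph{The main obstacle} is the bookkeeping of Step 3: verifying that every $k_n$-compensator from Step 2 pairs correctly with a Taylor coefficient of $(1-k_n/n)^{-d/2-j}$, and that the $k_n$-free remainder assembles exactly into $F_1(z)$ and $F_2(z)$ as in \eqref{f1}--\eqref{f2}. The cancellation is structurally natural — the final quantity must expand in pure powers of $1/n$, so no residual $k_n$-dependence can survive — but matching the surviving coefficients with $\tau_d,\Lambda_d,\chi_d$ and the martingale-indexed pieces $\calV_1,\calV_2,\calV_2^z,\calV_3,\calV_4$ is the (long but mechanical) algebra.
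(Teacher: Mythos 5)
Your proposal follows essentially the same approach as the paper: apply Theorem \ref{Th2} uniformly over $u\in\T_{k_n}$ (with the parity verification in case (II) handled exactly as you describe), expand the polynomials in $z-S_u$, recognize the martingales $N_{1,k_n},N_{2,k_n},N_{2,k_n}^z,N_{3,k_n},N_{4,k_n}$ in the resulting averages, Taylor-expand the $(n-k_n)^{-d/2-j}$ prefactors, and pass to the limit using Theorem \ref{thmCRM} and \eqref{ConvW}. The only inaccuracy is the remark that the cancellation of $k_n$-compensators ``forces the definitions of $\tau_d,\Lambda_d,\chi_d$'': those quantities are already fixed by Theorem \ref{Th2}, and the cancellation (which the paper records as the identity \eqref{eq-key}) is a consistency check that the assembled expansion is a function of $n$ alone rather than a constraint that determines the coefficients.
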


\begin{proof}[Proof of Theorem \ref{SBRW-Th}]
	Combining Lemmas \ref{SBRW-Lem1} with \ref{SBRW-Lem2}, together with \eqref{SBRW-eq4}, we obtain immediately Theorem \ref{SBRW-Th}.
\end{proof}

\begin{proof}[Proof of Corollary \ref{SBRW-Cor}]
This corollary  follows immediately from  Theorem \ref{SBRW-Th} by some elementary calculations.
\end{proof}

\begin{proof}[Proof of Lemma \ref{SBRW-Lem1}]
	We start by introducing some  notation.  For $u\in \T_{k_n}$, set
	\begin{eqnarray*}
		& &   X_{n,u}= \frac{\sum_{v\in \mathbb{T}_{n-k_n}(u) } \ind{S_{uv}=z} }{m^{n-k_n}} -  \P(\widetilde{S}_{n-k_n}=z-y)|_{y=S_u} ,    \quad \overline{X}_{n,u}=   X_{n,u} \mathbf{1}_{\{ |X_{n,u}|\leq m^{k_n}\}},\\
		& &   \overline{\mathbb{A}}_n = \frac{1}{m^{k_n}} \sum_{u\in \T_{k_n}} \overline{X}_{n,u}.
	\end{eqnarray*}
	It is easy to see the following fact:
	\begin{equation}\label{Dom-X}
	| X_{n,u}|\leq  W_{n-k_n}(u) +1, \quad  \mbox{ with }  W_{n-k_n}(u) =m^{-(n-k_n)}   \sum_{v\in \mathbb{T}_{n-k_n}(u) }1  . 
	\end{equation}
	We remind that $\{W_{n-k_n}(u): u\in \T_{k_n}\}  $ are mutually independent and identically distributed as $W_{n-k_n}$.
	
	The lemma will be proved if we can  show the following:
	\begin{align}
	\label{Zd-An-1}    &  \P ( \overline{\mathbb{A}}_n  \neq  \mathbb{A} _n \mbox{ i.o.} ) =0.  \\
	\label{Zd-An-2}  &{n^{d/2+2}}\Big(\overline{\A}_n -\E_{\D_{k_n}} \overline{\A}_n \Big)\xrightarrow{n \rightarrow \infty } 0\quad\mbox{ a.s. }   \\
	\label{Zd-An-3}   &   {n^{d/2+2}}\E_{\D_{k_n}} \overline{\A}_n \xrightarrow{n \rightarrow \infty } 0\quad\mbox{ a.s. }
	\end{align}
	
	To this end, we shall need the following result.
	\begin{lemma}(\cite{BinghamDoney1974AAP})\label{LTABWTlem4}
		Let $ W^*= \sup_n W_n$. 	Assume  $m>1$ and $ \E N(\log  N)^{ 1+\lambda}<\infty $.  Then
		\begin{equation}\label{Zd1.14}
		{\E}(W^*+1)(\log  (W^*+1))^{\lambda} <\infty.
		\end{equation}
	\end{lemma}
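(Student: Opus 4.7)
The conclusion is the classical $L(\log L)^\lambda$-maximal moment bound for the Galton--Watson martingale $W_n = Z_n(\Z^d)/m^n$, and would be proved in two essentially independent stages: first, reduce the maximal function $W^\ast$ to the terminal limit $W$ via Doob's inequality in Orlicz form, and then establish the $L(\log L)^\lambda$-moment of $W$ from the hypothesis on $N$ via a Biggins--Bingham--Doney argument.

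\textbf{Stage 1 (from $W^\ast$ to $W$).} Let $\Phi(x) := (x+1)(\log(x+1))^\lambda$. I would check routinely that $\Phi$ is non-negative, non-decreasing, convex on $[x_0,\infty)$ for some $x_0=x_0(\lambda)\geq 0$, and satisfies the $\Delta_2$-doubling condition $\Phi(2x) \leq C_\lambda \Phi(x)$. For non-negative martingales against an Orlicz function of $\Delta_2$ type, Doob's maximal inequality yields
$$\E \Phi(W^\ast) \leq C'_\lambda \sup_n \E \Phi(W_n);$$
see, e.g., Hall--Heyde, \emph{Martingale Limit Theory}, Chapter~2. Since $\lambda>3(d+6)\geq 1$, the Kesten--Stigum theorem (already recalled in the paper) gives $W_n\to W$ in $L^1$ and hence $W_n = \E[W\mid\D_n]$. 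By Jensen's inequality for conditional expectations applied to the convex $\Phi$, $\E\Phi(W_n) \leq \E\Phi(W)$, so $\sup_n \E\Phi(W_n) \leq \E\Phi(W)$. The task is thereby reduced to proving $\E\Phi(W) < \infty$.

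\textbf{Stage 2 (moment estimate for $W$).} This is the Biggins--Bingham--Doney moment theorem: for a supercritical Galton--Watson process of offspring mean $m>1$,
$$\E W (\log W)^\lambda < \infty \iff \E N(\log N)^{1+\lambda} < \infty.$$
The forward direction admits either a direct inductive proof from the branching recursion $W_{n+1} \stackrel{d}{=} m^{-1}\sum_{i=1}^N W_n^{(i)}$ combined with a multi-scale truncation, or a slicker size-biasing proof: under the Lyons--Pemantle--Peres tilted measure $\tilde{\P}$, the population has a spine carrying a random walk whose jump law is $\log \tilde{N}$, with $\tilde{N}$ the size-biased offspring variable. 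The finiteness $\tilde{\E}(\log \tilde{N})^\lambda = m^{-1}\E N(\log N)^{1+\lambda}<\infty$ is precisely what makes the relevant spine partial sums stay in $L^\lambda$, controlling $W$ through a many-to-one formula.

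\textbf{Main obstacle.} The pivotal technical point is the lossless matching of logarithmic exponents. The $\Delta_2$ property of $\Phi$ is what prevents Stage 1 from costing a power of $\log$; without it one would require the much stronger hypothesis $\E N(\log N)^{2+\lambda}<\infty$. In Stage 2, naive convexity bounds from the branching decomposition fail to close the induction (each generation introduces a constant offset), so either the size-biased spine construction or a carefully designed multi-scale truncation is essential. This is where the sharp hypothesis $\E N(\log N)^{1+\lambda}<\infty$ is genuinely consumed, and it constitutes the main technical difficulty.
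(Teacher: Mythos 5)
Your Stage 2 is correct as stated: the Bingham--Doney characterization $\E W(\log^+ W)^\lambda < \infty \iff \E N(\log N)^{1+\lambda} < \infty$ is indeed the right input, with the expected $+1$ shift between the log exponents on $N$ and on $W$ (the $\lambda=0$ case recovers Kesten--Stigum). The gap is in Stage 1, and it is not a technicality but a full logarithmic factor.

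The Orlicz-space Doob inequality $\E\Phi(W^*)\leq C\sup_n\E\Phi(W_n)$ is \emph{false} for $\Phi(x)=(x+1)(\log(x+1))^\lambda$. The condition you verify --- $\Delta_2$, i.e.\ $\Phi(2x)\leq C\Phi(x)$ --- is the wrong one; even $\Phi(x)=x$ is $\Delta_2$ and Doob fails in $L^1$. What a Doob inequality of that form requires is the complementary condition $\nabla_2$ (equivalently, the lower Matuszewska/Boyd index of $\Phi$ strictly greater than $1$), meaning $\Phi(cx)\geq 2c\,\Phi(x)$ for some $c>1$ and large $x$. For $\Phi(x)\sim x(\log x)^\lambda$ one has $\Phi(cx)/\Phi(x)\to c$ as $x\to\infty$, so $\nabla_2$ fails and no such inequality holds. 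The correct version of Doob's inequality on this scale costs one logarithm: from $\P(W^*>t)\leq t^{-1}\E[W_n\mathbf{1}_{\{W^*>t\}}]$ one obtains, after integrating $\Phi'(t)/t\sim(\log t)^\lambda/t$ and a Young-type split, an estimate of the form $\E W^*(\log^+ W^*)^\lambda\leq C\bigl(1+\sup_n\E W_n(\log^+ W_n)^{\lambda+1}\bigr)$. Feeding this into your Stage 2 (with $\lambda$ replaced by $\lambda+1$) would establish the lemma only under the strictly stronger hypothesis $\E N(\log N)^{2+\lambda}<\infty$, which is not what is claimed.

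The sharpness of the cited Bingham--Doney result --- no log is lost in passing from $W$ to $W^*$ --- is specific to the Galton--Watson martingale and is \emph{not} a consequence of any generic maximal inequality. It comes from the branching recursion: commuting the supremum with the sum in $W_{n+1}=m^{-1}\sum_{i=1}^N W_n^{(i)}$ gives the distributional bound $W^*\leq 1+m^{-1}\sum_{i=1}^N (W^*)^{(i)}$, where $(W^*)^{(i)}$ are i.i.d.\ copies of $W^*$ over the first-generation children. One then works with truncations $W^*_n=\max_{k\leq n}W_k$ and iterates this self-similar inequality (or, in Bingham--Doney's own treatment, analyses the tail $\P(W^*>x)$ directly through the associated functional recursion), which transfers the $L(\log L)^\lambda$ moment from $W$ to $W^*$ without loss. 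In short: your Stage 2 is the right ingredient, but the reduction from $W^*$ to $W$ has to exploit the branching fixed-point structure, not a martingale maximal inequality, and this is exactly where the sharp exponent $1+\lambda$ is preserved.
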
 	
	To prove  \eqref{Zd-An-1},    it suffices to show that
	\begin{equation}\label{BC1}
	\sum_{n=1}^\infty \P(  \overline{\mathbb{A}}_n  \neq  \mathbb{A} _n  ) <\infty.
	\end{equation}
	Observe that
	\begin{align*}
	\P (\A_n\neq \overline{\A}_n )&\leq\E \sum_{u\in \T_{k_n} } \P_{ \D_{k_n}}(X_{n,u}\neq \overline{X}_{n,u})  =\E  \sum_{u\in \T_{k_n} } \P_{ \D_{k_n}}(|X_{n,u}|\geq m^{k_n}) \\&\leq_{\eqref{Dom-X}} \E \sum_{u\in \T_{k_n} }  \P ( W_{n-k_n} (u)+1 \geq m^{k_n})= m^{k_n} \P ( W_{n-k_n}  +1 \geq m^{k_n})
	\\&\leq   \E\big((W_{n-k_n}+1 ) \mathbf{1}_{ \{W_{n-k_n}+1 \geq m^{k_n}\}} \big)
	\\ &\leq \E\big((W^*+1 ) \mathbf{1}_{ \{W^*+1 \geq m^{k_n}\}} \big)
	\\& \leq  (\log m)^{-\lambda}  k_n ^{-\lambda}\E (W^*+1) \log ^{\lambda} (W^*+1) .
	\end{align*}
	Then \eqref{BC1} follows from  the choice of $k_n$,  the  fact $\lambda \kappa>1$  and  \eqref{Zd1.14}.

	Now we turn to the proof of  \eqref{Zd-An-2}.  To this end, we will need the following inequality (by (5.3) in \cite{Biggins79}):  for $1<\alpha<2$,
	\begin{align*}
	&\P_{ \D_{k_n}} \left[  \bigg|  \sum_{u\in \T_{k_n}}  {m^{-k_n}}(\bar{X}_{n,u}- \E_{\D_{k_n} } \bar{X}_{n,u}) \bigg|     >  \epsilon n^{-d/2-2}\right ] 
	\\ 
	\leq \,&  K \dfrac{n^{(d/2+2)\alpha}}{\epsilon^{\alpha}} \left\{ m^{ -\alpha k_n}   Z_{k_n}(\Z^d)  \E  (W^*+1) ^\alpha \ind{|W^*+1| \leq m^{k_n} } + Z_{k _n}(\Z^d) \E  \ind{|W^*+1| > m^{k_n} }         \right\}.
	\end{align*}
Since  $\E Z_{k_n}(\Z^d)=m^{k_n}$, 	taking expectation in the both sides of the above     gives  the following 
\begin{multline*}
\E \P_{ \D_{k_n}} \left[  \bigg|  \sum_{u\in \T_{k_n}}  {m^{-k_n}}(\bar{X}_{n,u}- \E_{\D_{k_n} } \bar{X}_{n,u}) \bigg|     >  \epsilon n^{-d/2-2}\right ] 
\\
\leq 
K\dfrac{n^{(d/2+2)\alpha}}{\epsilon^{\alpha}} \left\{ m^{ (1-\alpha) k_n}      \E  (W^*+1) ^\alpha \ind{|W^*+1| \leq m^{k_n} } +m^{k_n}  \E  \ind{|W^*+1| > m^{k_n} }         \right\}.
\end{multline*}	
	Note that in the above formula and throughout the paper, $K$ denotes all constants, and thus its value may vary even in a single inequality.  	
	Thus    by taking expected value of  the above, we deduce that 	
	\begin{eqnarray*}
		& & \sum_{n=1}^{\infty} {\P} (|\overline{\A}_n-{\E}_{\D_{k_n}} \overline{\A}_n|>{\varepsilon}{{n}^{-(d/2+2)}})  \\
		&=& \sum_{n=1}^{\infty}\E\P_{ \D_{k_n}} \left[  \bigg|  \sum_{u\in \T_{k_n}}  {m^{-k_n}}(\bar{X}_{n,u}- \E_{\D_{k_n} } \bar{X}_{n,u}) \bigg|     >  \epsilon/n^{d/2+2}\right ]  \\
		&\leq&  \sum_{n=1}^{\infty}
		K \dfrac{n^{(d/2+2)\alpha}}{\epsilon^{\alpha}} \left\{ m^{ (1-\alpha) k_n}      \E  (W^*+1) ^\alpha \ind{|W^*+1| \leq m^{k_n} } + m^{k _n} \E  \ind{|W^*+1| > m^{k_n} }     \right\}
		\\ &=& 	 K 	{\epsilon^{-\alpha}} \E    \left\{ (W^*+1) ^\alpha \sum_{n=1}^{\infty} {n^{(d/2+2)\alpha}}  m^{ (1-\alpha) k_n} \ind{|W^*+1| \leq m^{k_n} } + \sum_{n=1}^{\infty}   n^{(d/2+2)\alpha} m^{k _n}  \ind{|W^*+1| > m^{k_n} }\right\}
		\\ &\leq&  K 	{\epsilon^{-\alpha}}   \E  (W^*+1) \Big(\log (W^*+1)  \Big)^{((d+4)\alpha +2)/(2\kappa) -1},
	\end{eqnarray*}
	which is finite,   since $ ( (d+4)\alpha +2)/(2\kappa) -1 <\lambda $  provided that $\alpha$  is sufficiently near one
	and    $\E  (W^*+1) \big(\log (W^*+1)  \big)^{ \lambda} <\infty$.
	Hence   \eqref{Zd-An-2}   follows  by the Borel-Cantelli lemma.
	
	It remains to prove \eqref{Zd-An-3}.   Since $\E_{\D_{k_n}} X_{n,u}=0$, we see  a.s.
	\begin{eqnarray*}
		|\E_{\D_{k_n} } \overline{\A}_{n} |&=&  \bigg|\frac{1}{m^{k_n}} \sum_{u\in \T_{k_n}}\E_{\D_{k_n} }  \overline{X}_{n,u} \bigg| =\bigg|-\frac{1}{m^{k_n}} \sum_{u\in \T_{k_n}}\E_{\D_{k_n} }X_{n,u} \mathbf{1}_{\{ |X_{n,u}|\geq m^{k_n}\}}\bigg | \\
		&\leq& \frac{1}{m^{k_n}} \sum_{u\in \T_{k_n}}\E_{\D_{k_n} } (W_{n-k_n}(u)+1)  \mathbf{1}_{\{ |W_{n-k_n}(u)+1|\geq m^{k_n}\}}
		\\ &=& W_{k_n} \E ( W_{n-k_n}+1)  \mathbf{1}_{\{ |W_{n-k_n}+1|\geq m^{k_n}\}} \\
		&\leq & W^* \E (W^*+1)  \mathbf{1}_{\{ |W^*+1|\geq m^{k_n}\}}
		\\ &\leq& k_n^{-\lambda} (\log m) ^{-\lambda}  W^*  \E (W^*+1)  \log ^{\lambda}  (W^*+1).
	\end{eqnarray*}
	Combining this  with  the fact   $\lambda\kappa -d/2-2>1$,    we get  
	\begin{equation*}
	\Big |\sum_{n=1}^{\infty} {n^{d/2+2}}\E_{\D_{k_n}} \overline{\A}_{n}\Big | \leq \sum_{n=1}^{\infty}{n^{d/2+2}}k_n^{-\lambda}  W^*  \E (W^*+1)  \log ^{\lambda}  (W^*+1)   <\infty   \mbox{  a.s.}   
	\end{equation*}
	This implies the a.s. convergence of the series  $\sum_{n=1}^{\infty} {n^{d/2+2}}\E_{\D_{k_n}} \overline{\A}_{n}$, and accordingly \eqref{Zd-An-3} follows.
	The lemma has been proved.
\end{proof}
 
\begin{proof}[Proof of Lemma \ref{SBRW-Lem2}]
(I)  First we consider  the case $L\in \calA_d$.
	
Since $ \norme{S_u}\leq C k_n $  for $u\in \T_{k_n}$,  then by Theorem  \ref{Th2} (I), we have 
	\begin{multline}\label{eq-2e}
	\P(\widetilde{S}_{n-k_n}=z-y)\bigg|_{y=S_u} 
 =
	\dfrac{ \big(2\pi \big)^{-d/2}}{ \sqrt{\det \Gamma_2}    } \Bigg\lbrace \frac{1}{(n-k_n)^{d/2}}+ \dfrac{1}{(n-k_n)^{1+d/2}}  \bigg[\tau_d -\dfrac{1}{2} \inp{z-S_u}{\Gamma_2^{-1}(z-S_u)}  \bigg]   
	\\ 
 + \dfrac{1}{(n-k_n)^{2+d/2}}	\bigg[ \frac{1}{8} \inp{z-S_u}{\Gamma_2^{-1}(z-S_u)}  ^2 -\inp{\Lambda_d(z-S_u)}{z-S_u} +\chi_d\bigg]  \Bigg\rbrace 
\\ +\frac{\alpha_{n}(z,S_u) }{n^{d/2+2}} ,
	\end{multline}
where  $\tau_d$, $\chi_d$ and $\Lambda_d$ are defined by \eqref{eq-taud} \eqref{eq-Cd} and \eqref{eq-Lambdad},  and  $ \alpha_n(z,S_u) $ are infinitesimals such that
\begin{equation*}
\sup_{   u\in \T_{k_n}   } \abs{ \alpha_{n}(z,S_u) }\xrightarrow{n\rightarrow \infty} 0.
\end{equation*}
Observe that  the following relations hold:
	\begin{align*}
\mathbf{1}.~ 	& \inp{z-S_u }{ \Gamma_2^{-1} (z-S_u)} = \inp{z}{\Gamma_2^{-1}z} -2\inp{\Gamma_2^{-1}z}{S_u} + \inp{S_u}{ \Gamma_2^{-1}  S_u  }, 
\\
\mathbf{2}.~ 	&    \inp{z-S_u }{ \Gamma_2^{-1} (z-S_u)} ^2=\inp{z}{\Gamma_2^{-1}z}^2 +4\inp{\Gamma_2^{-1}z}{S_u}^2 + \inp{S_u}{ \Gamma_2^{-1}  S_u  } ^2 
\\ 
&\quad \quad\quad+2  \inp{z}{\Gamma_2^{-1}z}\inp{S_u}{ \Gamma_2^{-1}  S_u  } -4\inp{z}{\Gamma_2^{-1}z} \inp{\Gamma_2^{-1}z}{S_u} -4\inp{\Gamma_2^{-1}z}{S_u}   \inp{S_u}{ \Gamma_2^{-1}  S_u  },
	\\
\mathbf{3}.~ 	 
&  (n-k_n)^{-d/2}=\frac{1}{n^{d/2}}\bigg[1+ \frac{dk_n}{2n}  + \frac{d(d+2)}{8} \frac{k_n^2}{n^2}\bigg] + O( \frac{k_n^3}{n^{3+d/2 }} ),
	\\ 
\mathbf{4}.~ 	
&   (n-k_n)^{-1-d/2}= \frac{1}{n^{d/2}} \Bigg[ \frac{1}{n}   +   \frac{(d+2)k_n}{2n^2} + O(   \frac{k_n^2}{n^3 } )\Bigg] ,
	\\ 
\mathbf{5}.~ 
	&
	(n-k_n)^{-2}=\frac{1}{n^{2}} +   O(\frac{k_n}{n^3 }) .
	\end{align*}
By the definitions of  the quantities $\tau_d,\Lambda_d$, we can get 
 	\begin{equation}\label{eq-x }
 \tr{\Gamma_2\Lambda_d}= \dfrac{1}{16}d\Big(\tr{\Gamma_4\Gamma_2^{-2}}-(d+2)(d+4)\Big)+\frac{1}{4} \tr{ \Gamma_4 \Gamma_2^{-2}},
 \end{equation}
 hence \begin{equation}\label{eq-key}
 \Big(\frac{d}{2}+1\Big) k_n \tau_d= k_n \tr{\Gamma_2 \Lambda_d} + \frac{1}{8}\Big(d(d+2) - \tr{\Gamma_4\Gamma_2^{-2}} \Big)k_n.
 \end{equation}
 	Substituting the above expressions into \eqref{eq-2e}, we obtain  that 
 	\begin{align}
	\nonumber	  &\P\Big(\widetilde{S}_{n-k_n}=z-y\Big)\bigg|_{y=S_u}
		 \\ 
		 \nonumber =&~ 
		  \dfrac{ (2\pi   n)^{-d/2}}{\sqrt{ \det \Gamma_2}}      \times  \Bigg\lbrace\Big(1+\frac{dk_n}{2n} + \frac{d(d+2)k_n^2}{8n^2 }\Big )  + 
		  \\ 
		  \nonumber  &   \quad  \Big(\frac{1}{n}+ \frac{ (d+2)k_n}{2n^2}\Big) \bigg[\tau_d    -\dfrac{1}{2}\inp{z}{\Gamma_2^{-1}z} + \inp{S_u}{ \Gamma_2^{-1}z} - \frac{1}{2}\inp{S_u}{\Gamma_2^{-1}S_u}  \bigg]   
	 \\
	 \nonumber &\quad	+ \dfrac{1}{n^2}	\bigg[ \frac{1}{8}  \Big(  \inp{z}{\Gamma_2^{-1}z}^2 +4\inp{\Gamma_2^{-1}z}{S_u}^2 + \inp{S_u}{ \Gamma_2^{-1}  S_u  } ^2 +2  \inp{z}{\Gamma_2^{-1}z}\inp{S_u}{ \Gamma_2^{-1}  S_u  }
	\\ 
	\nonumber & -4\inp{z}{\Gamma_2^{-1}z} \inp{\Gamma_2^{-1}z}{S_u} -4\inp{\Gamma_2^{-1}z}{S_u}   \inp{S_u}{ \Gamma_2^{-1}  S_u  }\Big)  
	- 
	\Big(\inp{z}{\Lambda_d z}  	-2\inp{ S_u}{ \Lambda_d z}
	\\
\nonumber	&   +  \inp{S_u}{\Lambda_d  S_u}
	    \Big) 
	 		+ \chi_d  \bigg]  \Bigg\rbrace 
   +\alpha_{n,u} \frac{1}{n^{ 2+d/2}}  
 		\\ 
 		\nonumber=~&
 		 \dfrac{ (2\pi   n)^{-d/2}}{\sqrt{ \det \Gamma_2}}   \Bigg\{    1+\frac{1}{n} \bigg[\tau_d  -\dfrac{1}{2}\inp{z}{\Gamma_2^{-1}z} + \inp{S_u}{ \Gamma_2^{-1}z}  
 	 - \frac{1}{2}\Big(\inp{S_u}{\Gamma_2^{-1}S_u}    -dk_n \Big) \bigg]  
 	 \\ 
 \nonumber	 & + \frac{1}{n^2}
 		\bigg[  \Big(\dfrac{1}{8}\inp{z}{\Gamma^{-1}z}^2-    \inp{z}{\Lambda_d z}+ \chi_d \Big) 
 		+  \inp{ S_u}{\Big( 2\Lambda_d   - \frac{1}{2} \inp{z}{\Gamma_2^{-1}z} 
 			\Gamma_2^{-1}\Big) z}  
 		\\ 
 		\nonumber& - \bigg(  \inp{S_u}{\Big( \Lambda_d  - \frac{1}{4}  \inp{z}{\Gamma_2^{-1}z}\Gamma_2^{-1}\Big)  S_u} - k_n \Big(\tr{ \Gamma_2 \Lambda_d }- \frac{1}{4} d\inp{z}{\Gamma_2^{-1}z}\Big)\bigg)		
  \\ 
  \nonumber &		+\dfrac{1}{2} \Big( \inp{\Gamma_2^{-1}z}{S_u}^2- k_n\inp{\Gamma_2^{-1}z}{z} \Big)
	   -\dfrac{1}{2} \inp{\Gamma_2^{-1}z}{\inp{S_u}{\Gamma_2^{-1}S_u}S_u-(d+2)k_nS_u}
 	\\
 	\nonumber &  + \frac{1}{8}  \Big(  \inp{S_u}{\Gamma_2^{-1}S_u}^2- (4+2d) k_n  \inp{S_u}{\Gamma_2^{-1}S_u} +d(d+2) (k_n^2+k_n)- \mathrm{tr} (\Gamma_4\Gamma_2^{-2}) k_n  \Big) \bigg]  \Bigg\rbrace 
 \\ \label{eq-414} &
 +\alpha_{n,u} \frac{1}{n^{ 2+d/2}}  
	\end{align}
	where $\alpha_{n,u}(u\in\T_{k_n})$ denotes a family of infinitesimals dominated by an absolute infinitesimal $\epsilon_n$, i. e.
	\begin{equation*}
	\sup \{ |\alpha_{n,u}| : u\in\T_{k_n} \}\leq \epsilon_n \longrightarrow0.
	\end{equation*}
By  the definitions of  the martingales $N_{2,n}$,  we  deduce
\begin{align*}
&\frac{1}{m^{k_n}}\sum_{u\in \T_{k_n} }\Big(\inp{S_u}{\Gamma_2^{-1}S_u}    -dk_n \Big) = \inp{N_{2,k_n}}{\Gamma_2^{-1} \mathbf{1}},
\\
&\frac{1}{m^{k_n}}\sum_{u\in \T_{k_n} }	\bigg(  \inp{S_u}{\Big( \Lambda_d  - \frac{1}{4}  \inp{z}{\Gamma_2^{-1}z}\Gamma_2^{-1}\Big)  S_u} - k_n \Big(\tr{ \Gamma_2 \Lambda_d }- \frac{1}{4} d\inp{z}{\Gamma_2^{-1}z}\Big)\bigg)
\\ 
&\qquad\qquad\qquad\qquad= \inp{N_{2,k_n}}{\Big( \Lambda_d  - \frac{1}{4}  \inp{z}{\Gamma_2^{-1}z}\Gamma_2^{-1}\Big) \mathbf{1}}.
\end{align*}
By the linearity  of inner product and the definitions of $N_{1,n}, N_{3,n}$, we see  
	\begin{align*}
	& \frac{1}{m^{k_n}}  \sum_{u\in \T_{k_n} }\inp{S_u}{ \Gamma_2^{-1}z} 
	= \inp{N_{1,k_n}}{\Gamma_2^{-1}z},
	\\
&\frac{1}{m^{k_n}}\sum_{u\in \T_{k_n} }	\inp{ S_u}{\Big( 2\Lambda_d   - \frac{1}{2} \inp{z}{\Gamma_2^{-1}z} \Gamma_2^{-1}\Big) z}= \inp{N_{1,k_n}} {\Big( 2\Lambda_d   - \frac{1}{2} \inp{z}{\Gamma_2^{-1}z}\Gamma_2^{-1}\Big) z },
	\\
	& \frac{1}{m^{k_n}}\sum_{u\in \T_{k_n} }	 \inp{\Gamma_2^{-1}z}{\inp{S_u}{\Gamma_2^{-1}S_u}S_u-(d+2)k_nS_u}
	=\inp{\Gamma_2^{-1}z}{N_{3,k_n}}.
	\end{align*}	
Using the definitions of $N_{2,n}^z, N_{4,n}$ and substituting all the above  into  \eqref{eq-414}, 	we derive that
	\begin{align}
	\nonumber	\mathbb{D}_{2,n}&= \frac{1}{m^{k_n}}\sum_{u\in \T_{k_n}}  \P\Big(\widetilde{S}_{n-k_n}=z-y\Big)\bigg|_{y=S_u}
		\\ \label{eq-D2na}&= \dfrac{ (2\pi   n)^{-d/2}}{\sqrt{ \det \Gamma_2}}   \bigg[  W_{k_n}+\frac{1}{n} F_{1,k_n}(z)   + \frac{1}{n^2}F_{2,k_n}(z) \bigg]
			+\frac{1}{n^{ 2+d/2}}\bigg( \frac{1}{m^{k_n}}\sum_{u\in \T_{k_n}} \alpha_{n,u} \bigg).  
	\end{align}
where
	\begin{align*}
F_{1,k_n}(z)	=&\Big(\tau_d  -\dfrac{1}{2}\inp{z}{\Gamma_2^{-1}z} \Big)W_{k_n}+ \inp{N_{1,k_n}}{ \Gamma_2^{-1}z}  
	- \frac{1}{2}\inp{ N_{2,k_n}}{\Gamma_2^{-1}\mathbf{1}},
	\\
	F_{2,k_n}(z)= &\Big(\dfrac{1}{8}\inp{z}{\Gamma^{-1}z}^2-    \inp{z}{\Lambda_d z}+ \chi_d \Big)W_{k_n} 
	+  \inp{N_{1,k_n}} {\Big( 2\Lambda_d   - \frac{1}{2} \inp{z}{\Gamma_2^{-1}z} \Gamma_2^{-1}\Big) z}  
	\\ \nonumber 	
	& - \inp{ N_{2,k_n}}{ \Big( \Lambda_d  - \frac{1}{4}  \inp{z}{\Gamma_2^{-1}z}\Gamma_2^{-1}\Big) \mathbf{1} } + \dfrac{1}{2}  N_{2,k_n}^z-\dfrac{1}{2}  \inp{ \Gamma_2^{-1}z}{N_{3,k_n}}+\frac{1}{8}N_{4,k_n}.
	\end{align*}
	By the choice of $\kappa$, we see that $\lambda \kappa>2$ and $(\lambda -3)\kappa>1$. Hence by    \eqref{ConvW} and   Theorem \ref{thmCRM}, 
	\begin{align}
	\label{dSRW4.9} & W_{k_n}-W = o(1/n^2), \quad   {N}_{q,k_n}-\mathcal{V}_q = o(1/n), \quad q=1,   2 ,
	\\ \label{dSRW4.9a}
	&{N}_{2,k_n}^z-\mathcal{V}_2^z = o(1), \qquad  {N}_{q,k_n}-\mathcal{V}_q = o(1), \quad q=3,   4.
	 \end{align}
	 Therefore   as $n$ tends to infinity,  
	 \begin{equation*}
	 F_{1,k_n}(z)-F_1(z)=o(\frac{1}{n}),\quad F_{2,k_n}(z)-F_2(z)=o(1), ~~\mbox{a.s.}, 
	 \end{equation*}
	 where $F_{1}(z)$ and $F_2(z)$ are defined by \eqref{f1} and \eqref{f2}.
	 
	 Observe that
	 \begin{equation*}
	 \left   | \frac{1}{m^{k_n}}\sum_{u\in\T_{k_n}} \alpha_{n,u}\right|\leq  \epsilon_n W_{k_n} \rightarrow 0.
	 \end{equation*}
Substituting these into \eqref{eq-D2na}, we have 
\begin{equation}\label{eq-D2n}
	\mathbb{D}_{2,n}= \dfrac{ (2\pi   n)^{-d/2}}{\sqrt{ \det \Gamma_2}}   \bigg[  W_{k_n}+\frac{1}{n} F_{1}(z)   + \frac{1}{n^2}F_{2}(z) \bigg]
+\frac{1}{n^{ 2+d/2}}o(1),  
\end{equation}
with $F_{1}(z)$ and $F_2(z)$   defined by \eqref{f1} and \eqref{f2}. This is exactly what we want to prove.
\\
(II)	We consider the case $L\in \calB_d$.

 By using Theorem  \ref{Th2} (II) and following the  arguments as in  the first part (I),   the formula \eqref{SBRW-eq6a}
can be handled  and we omit the details.
	
	The lemma is proved. 
\end{proof}

 \section*{Acknowledgments}
 The author would like to thank the anonymous referees for valuable comments and suggestions  which significantly contributed to improving the quality of the publication.
 

\bibliographystyle{amsplain}

\end{document}